\documentclass[11pt]{amsart}

\usepackage{amsmath,amssymb,amsthm}
\usepackage[margin=1.15in]{geometry}
\usepackage[colorlinks=true,linkcolor=blue,citecolor=blue,urlcolor=blue]{hyperref}

\newtheorem{theorem}{Theorem}[section]
\newtheorem{lemma}[theorem]{Lemma}
\newtheorem{proposition}[theorem]{Proposition}
\newtheorem{corollary}[theorem]{Corollary}
\newtheorem{question}[theorem]{Question}
\theoremstyle{definition}
\newtheorem{definition}[theorem]{Definition}
\theoremstyle{remark}
\newtheorem{remark}[theorem]{Remark}

\numberwithin{equation}{section}

\newcommand{\R}{\mathbb{R}}
\newcommand{\Q}{\mathbb{Q}}
\newcommand{\Z}{\mathbb{Z}}
\newcommand{\N}{\mathbb{N}}
\newcommand{\E}{\mathbb{E}}
\newcommand{\I}{\mathcal{I}}
\newcommand{\B}{\mathcal{B}}

\begin{document}

\title[Solyanik estimates for ergodic flows]{Solyanik estimates for ergodic flows}

\author{Kesen Huang}
\thanks{This paper was prepared with substantial assistance from Claude (Anthropic), which proposed the main arguments. The author completed the gaps in the proofs and independently verified all results; any remaining errors are the author's.}
\address{Department of Mathematics, Baylor University, Waco, Texas}
\email{Kesen\_Huang1@baylor.edu}

\subjclass[2020]{Primary 37A30, 37A10; Secondary 42B25}
\keywords{Solyanik estimates, Tauberian constants, halo functions, maximal ergodic theorem, measure-preserving flows}

\begin{abstract}
Let $\{T_t\}_{t\in\R}$ be a measure-preserving flow on a probability space $(\Omega,\Sigma,\mu)$ and let $T^*$ be the associated one-sided ergodic maximal operator. We show that the sharp Tauberian constant
\[
C(\alpha)=\sup_{\mu(E)>0}\frac{\mu(\{T^*\chi_E>\alpha\})}{\mu(E)},\qquad 0<\alpha<1,
\]
equals $1/\alpha$ for every $\alpha\in(0,1)$, unless every measurable set is invariant modulo null sets, in which case $C(\alpha)=1$ for every $\alpha$. In particular the Solyanik estimate $\lim_{\alpha\to1^-}C(\alpha)=1$ holds for every measure-preserving flow, and for ergodic flows on nonatomic spaces the estimate $C(\alpha)-1\le 1/\alpha-1$ holds with equality. This answers the one-parameter case of a question of Hagelstein and Parissis. The main point is the lower bound, which is obtained from a lacunary cross-section built out of the upcrossings of a probe function along the orbits of the flow.
\end{abstract}

\maketitle

\section{Solyanik estimates}\label{s:intro}

Let $M$ be a maximal operator acting on locally integrable functions on $\R^n$, such as the uncentered Hardy--Littlewood maximal operator
\[
M_{HL}f(x)=\sup_{B\ni x}\frac{1}{|B|}\int_B|f|,
\]
where the supremum is taken over all Euclidean balls containing $x$, its centered variant $M_{HL,c}$, or the strong maximal operator $M_S$, for which the supremum is taken over all rectangles containing $x$ with sides parallel to the coordinate axes. The \emph{sharp Tauberian constant} of $M$ with respect to $\alpha\in(0,1)$ is
\[
C_M(\alpha)=\sup_{0<|E|<\infty}\frac{1}{|E|}\bigl|\{x\in\R^n:\ M\chi_E(x)>\alpha\}\bigr| ;
\]
as a function of $\alpha$ it is also known as the \emph{halo function} of the associated differentiation basis. The classical restricted weak-type estimates give, for instance, $C_{M_{HL}}(\alpha)\le 3^n/\alpha$, a bound which stays away from $1$ as $\alpha\to1^-$. On the other hand, when $\alpha$ is close to $1$, the set $\{M\chi_E>\alpha\}$ consists of points lying in a ball or rectangle in which $E$ has relative density close to $1$, and it is natural to expect that this set is not much larger than $E$ itself.

This expectation was first confirmed by A.~A.~Solyanik~\cite{Sol93}, who studied the behavior of the halo functions of the Hardy--Littlewood and strong maximal operators as $\alpha\to1^-$. The following quantitative statements are due to Solyanik~\cite{Sol93} and Hagelstein--Parissis~\cite{HP14}: for $\alpha$ sufficiently close to $1$,
\begin{equation}\label{e:euclid}
C_{M_{HL,c}}(\alpha)-1\lesssim_n \frac1\alpha-1,\qquad
C_{M_{HL}}(\alpha)-1\lesssim_n \Bigl(\frac1\alpha-1\Bigr)^{\frac{1}{n+1}},\qquad
C_{M_S}(\alpha)-1\lesssim_n \Bigl(\frac1\alpha-1\Bigr)^{\frac1n}.
\end{equation}
In particular $\lim_{\alpha\to1^-}C_M(\alpha)=1$ for each of these operators. Estimates of this type are now called \emph{Solyanik estimates}. We refer to the survey~\cite{HP14} for background, for a discussion of the optimal exponents in~\eqref{e:euclid}, which are not known in general, and for further references. Solyanik estimates are also useful beyond their intrinsic interest: they were used in~\cite{HP15} to show that the halo functions $C_{M_{HL}}$ and $C_{M_S}$ belong to the H\"older class $C^{1/n}(0,1)$; see also~\cite{BH14} for related continuity results for halo functions.

\section{Solyanik estimates in ergodic theory}\label{s:ergodic}

There is a well-known correspondence between the distribution functions of geometric maximal operators in harmonic analysis and those of their ergodic-theoretic counterparts, going back to Calder\'on's transference principle~\cite{Cal68}. It is therefore natural to ask whether Solyanik estimates have ergodic analogues.

For a single measure-preserving transformation $T$ of a probability space $(\Omega,\Sigma,\mu)$ the answer has been known since the early days of ergodic theory. Indeed, with
\[
T^*f(\omega)=\sup_{N\ge1}\frac1N\sum_{j=0}^{N-1}|f(T^j\omega)|,
\]
the maximal ergodic theorem of Wiener~\cite{Wie39} and Yosida--Kakutani~\cite{YK39} gives $\mu(\{T^*f>\alpha\})\le\alpha^{-1}\|f\|_{L^1(\mu)}$. Applied to $f=\chi_E$ this yields
\[
\sup_{\mu(E)>0}\frac{\mu(\{T^*\chi_E>\alpha\})}{\mu(E)}\le\frac1\alpha\longrightarrow1\qquad(\alpha\to1^-).
\]
This argument does not iterate to give Solyanik estimates for multiparameter ergodic maximal operators. In~\cite{HP16}, Hagelstein and Parissis considered commuting measure-preserving transformations $U_1,\dots,U_n$ of $(\Omega,\Sigma,\mu)$ and the ergodic maximal operators
\[
M^*_{\mathcal B}f(\omega)=\sup_{R\in\mathcal B}\frac{1}{\#(R\cap\Z^n)}\sum_{(j_1,\dots,j_n)\in R\cap\Z^n}\bigl|f(U_1^{j_1}\cdots U_n^{j_n}\omega)\bigr|,
\]
where $\mathcal B$ is the collection of open rectangles with sides parallel to the axes containing the origin (giving the ergodic strong maximal operator $M^*_S$), the collection of open balls containing the origin ($M^*_{HL}$), or the collection of open balls centered at the origin ($M^*_{HL,c}$). Writing $C^*_S$, $C^*_{HL}$ and $C^*_{HL,c}$ for the corresponding sharp Tauberian constants
\[
C^*_{\mathcal B}(\alpha)=\sup_{\mu(E)>0}\frac{\mu(\{M^*_{\mathcal B}\chi_E>\alpha\})}{\mu(E)},
\]
they proved, by means of a transference principle in the spirit of Calder\'on combined with the Euclidean estimates~\eqref{e:euclid}, that for $\alpha$ sufficiently close to $1$
\begin{equation}\label{e:HP16}
C^*_S(\alpha)-1\lesssim_n\Bigl(\frac1\alpha-1\Bigr)^{\frac1n},\qquad
C^*_{HL,c}(\alpha)-1\lesssim_n\frac1\alpha-1,\qquad
C^*_{HL}(\alpha)-1\lesssim_n\Bigl(\frac1\alpha-1\Bigr)^{\frac{1}{n(n+1)}}.
\end{equation}
In~\cite{HP17} the same authors studied the regularity of the Tauberian constants associated with discrete and ergodic strong maximal operators.

The results of~\cite{HP16} concern ergodic averages given by discrete sums. Among the problems raised there~\cite[Problem~7.1]{HP16} is whether Solyanik estimates also hold for maximal operators associated with measurable flows. The simplest instance of this problem is the following.

\begin{question}\label{q:main}
Let $\{T_t\}_{t\in\R}$ be an ergodic measure-preserving flow on a probability space $(\Omega,\Sigma,\mu)$, as in~\cite[p.~76]{Pet83}, and let
\[
T^*f(\omega)=\sup_{s>0}\frac1s\int_0^s|f(T_t\omega)|\,dt,\qquad
C(\alpha)=\sup_{\substack{E\in\Sigma\\ \mu(E)>0}}\frac{\mu(\{\omega\in\Omega:\ T^*\chi_E(\omega)>\alpha\})}{\mu(E)} .
\]
Must $\lim_{\alpha\to1^-}C(\alpha)=1$?
\end{question}

In this note we answer Question~\ref{q:main} in the affirmative. In fact we compute $C(\alpha)$ exactly for every $\alpha\in(0,1)$, and we do so for every measure-preserving flow, ergodic or not.

\section{Main results}\label{s:main}

Throughout, $(\Omega,\Sigma,\mu)$ is a probability space and $\{T_t\}_{t\in\R}$ is a \emph{measure-preserving flow}: each $T_t\colon\Omega\to\Omega$ is measurable and preserves $\mu$(i.e. $\mu\circ T_t=\mu$ for all $t$), $T_0=\mathrm{id}$, $T_{s+t}=T_s\circ T_t$ for all $s,t\in\R$, and the map $(\omega,t)\mapsto T_t\omega$ is $\Sigma\otimes\B(\R)$-measurable, where $\B(\R)$ denotes the Borel $\sigma$-algebra of $\R$. We write $|\cdot|$ for Lebesgue measure on $\R$. The maximal operator $T^*$ and the constants $C(\alpha)$ are as in Question~\ref{q:main}. For bounded measurable $f$ the function $s\mapsto\frac1s\int_0^s|f(T_t\omega)|\,dt$ is continuous on $(0,\infty)$, so the supremum defining $T^*f$ may be restricted to rational $s$; in particular $T^*\chi_E$ is measurable.

Let
\[
\I=\{B\in\Sigma:\ \mu(T_tB\,\triangle\,B)=0\ \text{ for every } t\in\R\}
\]
be the $\sigma$-algebra of sets that are invariant modulo null sets. It contains every $\mu$-null set. The flow is \emph{ergodic} if $\mu(B)\in\{0,1\}$ for every $B\in\I$; by proof~\ref{t:birkhoff} below this agrees with the definition via strictly invariant sets.

\begin{theorem}\label{t:main}
Let $\{T_t\}_{t\in\R}$ be a measure-preserving flow on a probability space $(\Omega,\Sigma,\mu)$. Then
\begin{equation}\label{e:solyanik}
1\le C(\alpha)\le\frac1\alpha\qquad\text{for every }\alpha\in(0,1).
\end{equation}
Moreover, exactly one of the following holds:
\begin{enumerate}
\item[(a)] $\I=\Sigma$, that is, every measurable set is invariant modulo null sets. Then $C(\alpha)=1$ for every $\alpha\in(0,1)$.
\item[(b)] $\I\ne\Sigma$. Then $C(\alpha)=1/\alpha$ for every $\alpha\in(0,1)$.
\end{enumerate}
\end{theorem}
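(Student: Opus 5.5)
The plan has three parts: the upper bound $C(\alpha)\le 1/\alpha$ from the maximal ergodic theorem, the lower bound $C(\alpha)\ge1$ together with case (a) from the pointwise ergodic theorem, and the lower bound $C(\alpha)\ge1/\alpha$ in case (b) from an explicit construction, which is the main obstacle.

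\emph{Upper bound.} I will use the continuous-time maximal ergodic theorem, $\mu(\{T^*f>\alpha\})\le\alpha^{-1}\|f\|_{L^1}$. To keep things self-contained I would derive it from the discrete one. Fix $\delta>0$ and set $F(\omega)=\frac1\delta\int_0^\delta f(T_t\omega)\,dt$, an average over the time-$\delta$ map $U=T_\delta$. For $s=N\delta$ the flow average equals the $U$-average of $F$ over $N$ steps. The discrete Wiener--Yosida--Kakutani theorem then bounds the measure of the set where $\sup_{s\in\delta\N}$ of the averages exceeds $\alpha$ by $\alpha^{-1}\|F\|_1=\alpha^{-1}\|f\|_1$. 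Letting $\delta=2^{-k}\to0$, the sets increase to $\{\sup_{s\in\mathbb{D}}>\alpha\}$, where $\mathbb D$ is the dyadic rationals. By continuity in $s$ this is $\{T^*f>\alpha\}$. Applying this to $f=\chi_E$ gives $C(\alpha)\le1/\alpha$.

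\emph{Lower bound $1$ and case (a).} By the continuous Birkhoff theorem (Theorem~\ref{t:birkhoff}), $\frac1s\int_0^s\chi_E(T_t\omega)\,dt\to\chi_E(\omega)$ for a.e.\ $\omega$ as $s\to0$. The small-$s$ limit is really the Lebesgue differentiation theorem along a.e.\ orbit, justified by Fubini using measurability of $(\omega,t)\mapsto T_t\omega$. Hence $T^*\chi_E\ge\chi_E$ a.e., which gives $C(\alpha)\ge1$. If $\I=\Sigma$, then $\chi_E\circ T_t=\chi_E$ a.e.\ for each $t$. By Fubini, for a.e.\ $\omega$ we have $\chi_E(T_t\omega)=\chi_E(\omega)$ for a.e.\ $t$, so $T^*\chi_E=\chi_E$ a.e. Thus $C(\alpha)=1$. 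The two cases are obviously exclusive.

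\emph{Case (b): $C(\alpha)\ge1/\alpha$.} Pick $A\notin\I$; then $g=\chi_A$ is a probe function whose orbit paths $t\mapsto g(T_t\omega)$ are nonconstant on a set $\Omega_0$ of positive measure. For $\epsilon>0$ I want $E$ with $\mu(\{T^*\chi_E>\alpha\})\ge(1-\epsilon)\mu(E)/\alpha$. The model is one-dimensional. On $\R$, take a lacunary sequence of points $\cdots<x_{k}<x_{k+1}<\cdots$ and let $E$ be the union of intervals of length $\ell$ placed just after each $x_k$. The set where the forward averages exceed $\alpha$ contains, for each interval, a preceding stretch of length about $\ell(1/\alpha-1)$. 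Taken together, these have density close to $1/\alpha$ times that of $E$, provided gaps are long compared with $\ell/\alpha$ and the other intervals do not interfere.

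To realize this on orbits, I would build a measurable cross-section: times $t$ at which the (smoothed) probe $\frac1h\int_t^{t+h}g(T_u\omega)\,du$ upcrosses a fixed level, with $h$ and the level chosen so that upcrossings occur on a positive-measure set of orbits. I would then thin these times to keep only those separated by gaps of at least $L$, and decide membership measurably via the orbit structure. Given cross-section times $\tau$ along the orbit of $\omega$, let $E=\{\omega:\ \exists\tau\in[-\ell,0]\}$, so that each orbit sees intervals $[\tau,\tau+\ell]$. Flow-invariance of $\mu$ plus Fubini (a Kac/Ambrose-type computation) converts orbitwise densities into measures. This gives $\mu(\{T^*\chi_E>\alpha\})\ge(1/\alpha)(1-O(\ell/(\alpha L)))\,\mu(E)$, and letting $\ell/L\to0$ yields the claim.

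The hardest step will be producing the measurable, well-separated cross-section with positive intensity from the mere non-invariance of $A$, without assuming ergodicity or standardness. I would first try to show that the upcrossing times of the continuous smoothed probe are locally finite and measurable, and that their intensity is positive; the latter comes from $\chi_A\circ T_t\ne\chi_A$ on positive measure for some $t$. The thinning would be done greedily forward from a measurable rule, for instance keeping the first upcrossing after each multiple of $L$ in a suitable relative position.
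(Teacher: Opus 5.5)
Your upper bound (via $U=T_\delta$, $F=\frac1\delta\int_0^\delta f\circ T_t\,dt$, dyadic $\delta$ and continuity in $s$), the lower bound $1$, and case~(a) are fine. The gap is in case~(b), at the step you yourself call hardest.

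Upcrossings of a \emph{single} level $c$ by $g_\omega(t)=\frac1h\int_t^{t+h}\chi_A(T_u\omega)\,du$ are not well separated, and need not even be locally finite. The function $g_\omega$ is $1/h$-Lipschitz with derivative $\bigl(\chi_A(T_{t+h}\omega)-\chi_A(T_t\omega)\bigr)/h$. This derivative can change sign infinitely often near a point, so $g_\omega$ can dip below $c$ and return to it at times accumulating at that point. (For Lebesgue-a.e.\ $c$, a Banach-indicatrix plus Fubini argument does give local finiteness, but in general no uniform lower bound on the gaps.)

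Your thinning does not repair this. The rule ``the first upcrossing after each multiple of $L$'' measures time from the base point. So the set selected for $T_x\omega$ is not the set selected for $\omega$ shifted by $x$. Without this equivariance, $E=\{\omega:\exists\tau\in[-\ell,0]\}$ does not produce the pattern of intervals $[\tau,\tau+\ell]$ along orbits, and the Kac-type computation has nothing to act on. Equivariantly thinning a bi-infinite, non-separated set into an $L$-separated set of positive intensity is a Rokhlin-type problem in its own right. For instance, a rule such as ``keep $\tau$ if the preceding gap is at least $L$'' may select nothing.

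The missing idea is to use upcrossings between \emph{two} levels $a<b$: times $t$ with $g_\omega(t)=b$ for which some $s<t$ has $g_\omega(s)\le a$ and $g_\omega<b$ on $[s,t)$. Between two such times the path must fall from $b$ to at most $a$ and climb back, so the Lipschitz bound forces gaps of at least $2(b-a)h$. This set is automatically equivariant, and no thinning is needed. It suffices that the $E$-interval together with its halo, of total length $\ell/\alpha$, fits inside one gap, and you arrange that by shrinking $\ell$ rather than enlarging the gaps. With that separation your configuration gives exactly $1/\alpha$, with no $O(\ell/(\alpha L))$ loss.

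Positive intensity also needs more than $\mu(T_tA\triangle A)>0$. The paper proceeds in three steps:
\begin{enumerate}
\item It chooses $h$ with $G=G_h$ not $\I$-measurable.
\item It finds rationals $a<b$ and $P\in\I$ with $\mu(P)>0$ on which $\E[\chi_{\{G<a\}}\mid\I]$ and $\E[\chi_{\{G>b\}}\mid\I]$ are both positive.
\item It applies Birkhoff to the reversed flow, so both levels are visited at arbitrarily negative times. This forces a past upcrossing on a.e.\ orbit through $P$.
\end{enumerate}
Finally, the measurability of the time since the last upcrossing, which you set aside, needs its own reduction to countably many times (Lemma~\ref{l:meas}, Proposition~\ref{p:t0meas}).
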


\begin{corollary}\label{c:ergodic}
Let $\{T_t\}_{t\in\R}$ be an ergodic measure-preserving flow on a nonatomic probability space. Then $C(\alpha)=1/\alpha$ for every $\alpha\in(0,1)$. In particular $\lim_{\alpha\to1^-}C(\alpha)=1$, and the Solyanik estimate
\[
C(\alpha)-1\le\frac1\alpha-1
\]
holds with equality for every $\alpha\in(0,1)$.
\end{corollary}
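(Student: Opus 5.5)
The corollary follows directly from Theorem~\ref{t:main}. The plan is to show that an ergodic flow on a nonatomic space always falls under alternative~(b), and then read off the conclusions.

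First, I will produce a set $A\in\Sigma$ with $0<\mu(A)<1$. Since $\mu(\Omega)=1>0$ and the space is nonatomic, $\Omega$ itself is not an atom. By the definition of an atom, there is therefore a measurable $A\subset\Omega$ with $0<\mu(A)<\mu(\Omega)=1$. This uses only the definition of nonatomicity, not Sierpi\'nski's intermediate value theorem.

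Second, ergodicity means $\mu(B)\in\{0,1\}$ for every $B\in\I$. Since $\mu(A)\notin\{0,1\}$, we get $A\notin\I$, so $\I\ne\Sigma$. The two alternatives in Theorem~\ref{t:main} are mutually exclusive and exhaustive, so we are in case~(b), and hence $C(\alpha)=1/\alpha$ for every $\alpha\in(0,1)$.

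The remaining assertions are then immediate. We have $\lim_{\alpha\to1^-}C(\alpha)=\lim_{\alpha\to1^-}1/\alpha=1$, and the identity $C(\alpha)-1=\frac1\alpha-1$ holds for every $\alpha\in(0,1)$, which is the Solyanik estimate with equality. There is no real obstacle here: all the substance is in Theorem~\ref{t:main}, specifically the lower bound $C(\alpha)\ge1/\alpha$ in case~(b). The only point needing care is that ergodicity is used in its $\I$-form, which is the definition adopted in the paper.
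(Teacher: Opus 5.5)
Your proposal is correct and follows essentially the same route as the paper: nonatomicity gives $A\in\Sigma$ with $0<\mu(A)<1$, ergodicity then forces $A\notin\I$, so $\I\ne\Sigma$ and case~(b) of Theorem~\ref{t:main} applies. The limit and the equality $C(\alpha)-1=\frac1\alpha-1$ are then immediate, as you say.
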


\begin{proof}
Since $\mu$ is nonatomic, there is $A\in\Sigma$ with $0<\mu(A)<1$. By ergodicity $A\notin\I$, so case~(b) of Theorem~\ref{t:main} applies.
\end{proof}

\begin{remark}\label{r:main}
\begin{enumerate}
\item The estimate~\eqref{e:solyanik} already answers Question~\ref{q:main}, and it does not require ergodicity: the upper bound is the maximal ergodic theorem for flows, and the lower bound is trivial(by taking $E=\Omega$). The substance of Theorem~\ref{t:main} is the lower bound $C(\alpha)\ge1/\alpha$ in case~(b), which shows that the Solyanik estimate cannot be improved for any flow that moves some set.
\item The value $1/\alpha$ is the Euclidean one. For the one-sided maximal operator\\
\mbox{$M^+f(x)=\sup_{s>0}\frac1s\int_x^{x+s}|f|$} on $\R$, the rising sun lemma gives\\
\mbox{$|\{M^+\chi_E>\alpha\}|=\alpha^{-1}|E\cap\{M^+\chi_E>\alpha\}|$}, and a.e.\ point of $E$
lies in $\{M^+\chi_E>\alpha\}$ by Lebesgue's differentiation theorem; hence
\mbox{$|\{M^+\chi_E>\alpha\}|=|E|/\alpha$} for every $E$ with $0<|E|<\infty$. For instance,
for $E=[0,1]$ the level set is the interval $(1-1/\alpha,1)$. The proof of the lower
bound in case~(b) transplants this configuration into $\Omega$ along the orbits of the flow. 
\item If the supremum defining $C(\alpha)$ is restricted to sets with $0<\mu(E)<1$, Theorem~\ref{t:main} remains true, provided such sets exist: the extremal sets constructed in Section~\ref{s:proof} satisfy $\mu(E)\le\alpha$.
\item The two-sided maximal operator behaves analogously, with $1/\alpha$ replaced by $2/\alpha-1$; see Proposition~\ref{p:twosided}.
\end{enumerate}
\end{remark}

\section{Proof of \texorpdfstring{Theorem~\ref{t:main}}{the main theorem}}\label{s:proof}

\subsection{Preliminaries}

We shall use two classical facts about flows; see~\cite{Pet83}.

\begin{theorem}[Maximal ergodic theorem for flows~\cite{Pet83},P76]\label{t:max}
For every $f\in L^1(\mu)$ and $\alpha>0$,\\
 $\mu(\{T^*f>\alpha\})\le\alpha^{-1}\|f\|_{L^1(\mu)}$.
\end{theorem}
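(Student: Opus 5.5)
The plan is to reduce Theorem~\ref{t:max} to the one-dimensional rising sun lemma by a Calder\'on-type transference along orbits, and then let the truncation parameters tend to infinity. We may assume $f\ge0$, since $T^*f=T^*|f|$. Fix a length $L>0$. Define the truncated operator
\[
T^*_Lf(\omega)=\sup_{0<s\le L}\frac1s\int_0^s f(T_t\omega)\,dt .
\]
As $L\uparrow\infty$ the sets $\{T^*_Lf>\alpha\}$ increase to $\{T^*f>\alpha\}$, and by the continuity remark in Section~\ref{s:main} we may restrict to rational $s$, so everything is measurable. It therefore suffices to prove $\mu(\{T^*_Lf>\alpha\})\le\alpha^{-1}\|f\|_1$ for each fixed $L$.

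The key step is to fix $\omega$ and a large $R>L$ and look at the orbit function $g_\omega(t)=f(T_t\omega)\chi_{[0,R+L]}(t)$ on $\R$. By Fubini and the joint measurability of $(\omega,t)\mapsto T_t\omega$, this function is locally integrable in $t$ for a.e.\ $\omega$. For $t\in[0,R]$ we have $T^*_Lf(T_t\omega)\le M^+g_\omega(t)$, because the averages over $[t,t+s]$ with $s\le L$ only use values of $g_\omega$ on $[0,R+L]$. The one-sided Hardy--Littlewood (rising sun) weak-type estimate, the same one used in Remark~\ref{r:main}(2), gives
\[
|\{t\in\R:\ M^+g(t)>\alpha\}|\le\alpha^{-1}\|g\|_{L^1(\R)} .
\]
Applying this to $g_\omega$ yields
\[
\int_0^R\chi_{\{T^*_Lf>\alpha\}}(T_t\omega)\,dt\le\frac1\alpha\int_0^{R+L}f(T_t\omega)\,dt .
\]

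Next, integrate this inequality in $\omega$. Using Fubini, justified by the joint measurability, together with the invariance $\mu\circ T_t=\mu$, the left side becomes $R\,\mu(\{T^*_Lf>\alpha\})$ and the right side becomes $\alpha^{-1}(R+L)\|f\|_1$. Dividing by $R$ and letting $R\to\infty$ gives the truncated bound. Letting $L\to\infty$ and using monotone convergence of the sets then finishes the proof.

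The main obstacle is technical rather than conceptual: making sure the orbitwise inequality holds for a.e.\ $\omega$ and that the Fubini interchange is legitimate. For the first point, note that $\int_0^{R+L}f(T_t\omega)\,dt<\infty$ for a.e.\ $\omega$, since the $\omega$-integral of this quantity equals $(R+L)\|f\|_1<\infty$. For the second, the set $\{(\omega,t):\ T^*_Lf(T_t\omega)>\alpha\}$ must be product-measurable; this holds because $T^*_Lf$ is $\Sigma$-measurable and it is composed with the jointly measurable map $(\omega,t)\mapsto T_t\omega$. The rising sun estimate itself is used as a black box, in the form already invoked in Remark~\ref{r:main}.
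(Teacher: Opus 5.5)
Your proof is correct. The paper gives no argument of its own for Theorem~\ref{t:max}; it quotes the result from~\cite{Pet83}. The natural comparison is therefore the paper's proof of Theorem~\ref{t:birkhoff}, which passes to the time-one map with $F(\omega)=\int_0^1 f(T_t\omega)\,dt$ and invokes the discrete theorem. Done naively for the maximal inequality, that reduction only controls windows of integer length and costs the factor $\lceil s\rceil/s$, which is unbounded as $s\to0$. Recovering the constant $\alpha^{-1}$ that way needs an extra limiting step: time-$\eta$ maps with $F_\eta=\eta^{-1}\int_0^\eta f\circ T_t\,dt$, then $\eta\to0$ and Fatou's lemma.

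Your Calder\'on-type transference to the one-sided maximal operator on $\R$ avoids discretization. It gives the sharp constant at every scale in one step, which is exactly what the upper bound in~\eqref{e:solyanik} requires, since the main theorem asserts $C(\alpha)=1/\alpha$ exactly. The truncation $s\le L$, the padding to $[0,R+L]$, the Tonelli step using joint measurability and invariance, and the limits $R\to\infty$ and $L\to\infty$ are all handled correctly.

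Two small points of precision:
\begin{enumerate}
\item The paper's continuity remark is stated only for bounded $f$. For $f\in L^1$ you need one more line: $s\mapsto\int_0^s f(T_t\omega)\,dt$ is nondecreasing, continuous on the interval where it is finite, and $+\infty$ beyond it. That still lets you take the supremum over rational $s$ together with $s=L$.
\item Remark~\ref{r:main}(2) records the rising sun identity only for indicator functions. You use the weak-type bound $|\{M^+g>\alpha\}|\le\alpha^{-1}\|g\|_{L^1(\R)}$ for general $0\le g\in L^1(\R)$. That bound follows from the same lemma applied to $x\mapsto\int_0^x g-\alpha x$, but it is this general form you are invoking.
\end{enumerate}
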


\begin{theorem}[Birkhoff's ergodic theorem for flows]\label{t:birkhoff}
For every $f\in L^1(\mu)$,
\[
\lim_{L\to\infty}\frac1L\int_0^Lf(T_t\omega)\,dt=\E[f\mid\I](\omega)\qquad\text{for $\mu$-a.e. }\omega\in\Omega.
\]
\end{theorem}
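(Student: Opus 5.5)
The plan is to reduce to the discrete Birkhoff theorem for the time-one map $T_1$, and then to identify the limit using only the defining properties of $\E[\,\cdot\mid\I]$.

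\emph{Step 1 (discretization).} For $f\in L^1(\mu)$ put
\[
F(\omega)=\int_0^1 f(T_t\omega)\,dt,\qquad G(\omega)=\int_0^1|f(T_t\omega)|\,dt .
\]
By joint measurability of $(\omega,t)\mapsto T_t\omega$, Tonelli and $\mu\circ T_t=\mu$, we get $\|G\|_{L^1}=\|f\|_{L^1}<\infty$. Hence $G<\infty$ a.e., $F$ is defined a.e., and $F,G\in L^1(\mu)$. Moreover
\[
\int_0^N f(T_t\omega)\,dt=\sum_{j=0}^{N-1}F(T_1^j\omega).
\]
The discrete Birkhoff theorem for the measure-preserving map $T_1$ then gives a.e.\ and $L^1$ convergence of $\frac1N\int_0^N f(T_t\omega)\,dt$ to some $g\in L^1$.

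\emph{Step 2 (non-integer times).} Let $N=\lfloor L\rfloor$. Then
\[
\Bigl|\frac1L\int_0^L f(T_t\omega)\,dt-\frac1N\int_0^N f(T_t\omega)\,dt\Bigr|\le\frac1L\bigl|\textstyle\int_0^N\bigr|\cdot\Bigl(\frac NL-1\Bigr)^{\!\!*}+\frac{G(T_1^N\omega)}{N},
\]
where the first term tends to $0$ because the integer averages converge. For the second term, apply the discrete Birkhoff theorem to $G$: convergence of $\frac1N\sum_{j<N}G\circ T_1^j$ forces $G(T_1^N\omega)/N\to0$ a.e. Hence $\frac1L\int_0^L f(T_t\omega)\,dt\to g(\omega)$ a.e.\ as $L\to\infty$ through the reals. (The factor marked $^*$ is just $\frac NL-1$, written informally here; the actual bound is the routine estimate $|1/L-1/N|\cdot|\int_0^N|\to0$.)

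\emph{Step 3 (identifying the limit).} Define $g=\limsup_{L\to\infty,\,L\in\Q}\frac1L\int_0^L f\circ T_t\,dt$. This is measurable and agrees a.e.\ with the limit from Step 2.

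First, $g$ is $\I$-measurable. Fix $s\in\R$. For a.e.\ $\omega$ we have both $\int_0^{|s|}|f(T_t\omega)|\,dt<\infty$ and $\int_{-|s|}^{0}|f(T_t\omega)|\,dt<\infty$; this follows from Tonelli as in Step 1. For such $\omega$, the averages along the orbit of $T_s\omega$ and of $\omega$ differ by $O(1/L)$, so $g\circ T_s=g$ a.e. Consequently each set $\{g>c\}$ satisfies $\mu(T_s\{g>c\}\triangle\{g>c\})=0$, and so lies in $\I$.

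Second, $\int_B g=\int_B f$ for every $B\in\I$. Since $\chi_B\circ T_{-t}=\chi_{T_tB}=\chi_B$ a.e., invariance of $\mu$ gives $\int_B f\circ T_t\,d\mu=\int_B f\,d\mu$ for each $t$. Integrating over $t\in[0,N]$ with Fubini yields $\int_B\frac1N\int_0^N f\circ T_t\,dt\,d\mu=\int_B f$. The $L^1$ convergence from Step 1 then lets us pass to the limit, giving $\int_B g=\int_B f$. Hence $g=\E[f\mid\I]$ a.e.

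The main point needing care is Step 3. One must work with $\I$, the flow's $\sigma$-algebra of sets invariant mod null sets, rather than with the invariant $\sigma$-algebra of $T_1$ alone, which may be larger. This is why the argument proves invariance of $g$ under every $T_s$ directly, instead of quoting the identification of the limit in the discrete theorem. The null-set bookkeeping there, with a different exceptional set for each $s$, is harmless because $\I$ is defined one $t$ at a time.
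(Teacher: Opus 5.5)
Your proposal is correct and follows essentially the same route as the paper: discretize via $F=\int_0^1 f\circ T_t\,dt$ and the time-one map, control the fractional remainder through averages of $|f|$, then identify the limit by proving $T_s$-invariance for every real $s$ (not just integers) and checking $\int_B g=\int_B f$ for $B\in\I$ by Fubini and $L^1$ convergence. The differences are minor: you take $L^1$ convergence from the discrete theorem instead of proving it by bounded approximation, and you treat $s<0$ and the measurability of the limit explicitly. Also, the displayed inequality in Step~2 should simply read $\bigl|\tfrac1L-\tfrac1N\bigr|\,\bigl|\int_0^N f(T_t\omega)\,dt\bigr|+G(T_1^N\omega)/N$.
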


\begin{proof}\label{r:invariant}
    We give the proof of Birkhoff's ergodic theorem for flows for completeness. \par
    We do almost the same thing as the proof of the discrete time. We proof:
    \begin{itemize}
        \item [(1)] $\lim_{L\to\infty}\frac1L\int_0^Lf(T_t\omega)\,dt=\overline{f}\,\text{ a.e and in $L^1$}$;
        \item [(2)] $\overline{f}$ is $\I$ measurable, and $\overline{f}=\E[f\mid\I]$.
    \end{itemize}
    (1): Since by discrete time version of Birkhoff's ergodic theorem, we have 
    \begin{align*}
        \frac{1}{L}\int_0^L f(T_t\omega)dt=\frac{1}{L}\int_0^{\left\lfloor L\right\rfloor}  f(T_t\omega)dt+\frac{1}{L}\int_{\left\lfloor L\right\rfloor }^L f(T_t\omega)dt.
    \end{align*}
    The first integral goes to $\overline f(\omega)$, by considering $F(\omega)=\int_0^1 f(T_t\omega)dt$, the first intergral
    \begin{align*}
        \frac{1}{L}\int_0^{\left\lfloor L\right\rfloor}  f(T_t\omega)dt=&\frac{\left\lfloor L\right\rfloor}{L}\frac{1}{\left\lfloor L\right\rfloor}\sum_{j=0}^{\left\lfloor L\right\rfloor-1}\int_{j}^{j+1}f(T_t\omega)dt\\
        =&\frac{\left\lfloor L\right\rfloor}{L}\frac{1}{\left\lfloor L\right\rfloor}\sum_{j=0}^{\left\lfloor L\right\rfloor-1}\int_{0}^{1}f(T_tT_j\omega)dt\\
        =&\frac{\left\lfloor L\right\rfloor}{L}\frac{1}{\left\lfloor L\right\rfloor}\sum_{j=0}^{\left\lfloor L\right\rfloor-1}F(T_j\omega)dt\to \overline f(\omega).
    \end{align*}
    For the second integral, we have estimation below:
    \begin{align*}
        \left\lvert \frac{1}{L}\int_{\left\lfloor L\right\rfloor }^L f(T_t\omega)dt\right\rvert \leq& \frac{1}{L}\int_{\left\lfloor L\right\rfloor }^{\left\lfloor L\right\rfloor +1} \left\lvert f(T_t\omega)\right\rvert dt\\
        =&\frac{\left\lfloor L\right\rfloor }{L}\frac{1}{\left\lfloor L\right\rfloor }\int_{\left\lfloor L\right\rfloor }^{\left\lfloor L\right\rfloor +1} \left\lvert f(T_t\omega)\right\rvert dt\\
        =&\frac{\left\lfloor L\right\rfloor }{L}\left(\frac{\left\lfloor L\right\rfloor+ 1}{\left\lfloor L\right\rfloor }\frac{1}{\left\lfloor L\right\rfloor +1}\int_0^{\left\lfloor L\right\rfloor +1}\left\lvert f(T_t\omega)\right\rvert dt-\frac{ 1}{\left\lfloor L\right\rfloor }\int_0^{\left\lfloor L\right\rfloor }\left\lvert f(T_t\omega)\right\rvert dt\right)\to 0.   \\
    \end{align*}\par
    For $L^1$ case, we see if $f$ is bounded, then $\frac{1}{L}\int_0^L f(T_t\omega)dt\to \overline f$ in $L^1$, now assume $f\in L^1$, we can approximate $f$ with a bounded function, say $g$, $\left\lVert g-f\right\rVert_{1} <\epsilon$.
    {\small \begin{align*}
        \left\lVert \frac1L\int_0^Lf(T_t\omega)\,dt-\overline{f}\right\rVert_1\leq  \left\lVert \frac1L\int_0^Lf(T_t\omega)\,dt-\frac1L\int_0^Lg(T_t\omega)\,dt\right\rVert_1+\left\lVert \frac1L\int_0^Lg(T_t\omega)\,dt-\overline g\right\rVert_1 +\left\lVert \overline g-\overline f\right\rVert_1,
    \end{align*}}
    by Fubini's theorem and Dominated Convergence Theorem, we see the first two terms on right hand side are small. And $\overline g-\overline f=\overline{ g-f}$, by Fubini's theorem and Fatou's lemma,
    \begin{align*}
        \left\lVert \overline{f}\right\rVert_1 =&\left\lVert \liminf_{L\to\infty}\left\lvert \frac1L\int_0^Lf(T_t\omega)\,dt\right\rvert \right\rVert_1\leq \liminf_{L\to\infty}\left\lVert\frac1L\int_0^Lf(T_t\omega)\,dt\right\rVert_1\\
        \leq &\liminf_{L\to\infty}\frac1L\int_{\Omega}\int_0^L\left\lvert f(T_t\omega)\right\rvert \,dt\, d\mu=\liminf_{L\to\infty}\frac1L\int_0^L\int_{\Omega}\left\lvert f(T_t\omega)\right\rvert\, d\mu \,dt=\left\lVert f\right\rVert _1.
    \end{align*}
    So $\left\lVert \overline g-\overline f\right\rVert_1\leq \left\lVert {g-f}\right\rVert_1<\epsilon$ is small. Then $\left\lVert \overline f -f \right\rVert_1\leq \left\lVert \overline f -\overline g \right\rVert_1+ \left\lVert \overline g -g \right\rVert_1+ \left\lVert g -f \right\rVert_1<3\epsilon$. \par
    (2): $\overline f$ is $\I$ measurable, we prove this by showing for any $s$, $\overline f(T_s)=\overline f$ a.e..
    \begin{align*}
        \overline f(T_s\omega)=&\lim_{L\to \infty} \frac1L\int_0^Lf(T_tT_s\omega)\,dt=\lim_{L\to \infty} \frac1L\int_0^Lf(T_{t+s}\omega)\,dt\\
        =&\lim_{L\to \infty} \frac1L\int_{s}^{L+s}f(T_{t}\omega)\,dt=\lim_{L\to \infty} \frac1L\left(\int_{0}^{L+s}f(T_{t}\omega)\,dt-\int_{0}^{s}f(T_{t}\omega)\,dt\right)\\
        =&\overline{f}(\omega).
    \end{align*}
    So $T_s^{-1}\{\omega:\overline f(\omega)>a\}=T_s^{-1}\{\omega:\overline f(T_s\omega)>a\}=\{\omega:\overline f(\omega)>a\}$ a.e. and in this way $\{\omega:\overline f(\omega)>a\}\in \I$.\par
    For any $B\in \I$, we prove $\int_B \overline f=\int_B f$. Let $F(\omega)=\int_{0}^{1} f(T_t\omega)\,dt$, so 
    \begin{align*}
        \int_B \frac{1}{N}\sum_{k=0}^{N-1}F(T_k\omega)\,d\mu=\int_B \frac{1}{N}\sum_{k=0}^{N-1}\int_0^1 f(T_{t+k}\omega)\,dt \,d\mu=\int_B\frac{1}{N}\int_0^N f(T_t\omega)\,dt\,d\mu\to \int_B \overline f\,d\mu.
    \end{align*}
    On the other hand, 
    \begin{align*}
        \int_B F(T_k\omega)\,d\mu=\int_{T_k^{-1}B}F(\omega)\,d\mu=\int_{B}F(\omega)\,d\mu.
    \end{align*}
    So 
    \[\int_{B}F(\omega)\,d\mu=\int_B \overline f\,d\mu.\]
    By Fubini's theorem, we have:
    \[\int_{B}F(\omega)\,d\mu=\int_B\int_0^1 f(T_t\omega)\, dt \, d\mu=\int_0^1\int_B f(T_t\omega)\, d\mu\, dt=\int_0^1\int_{T_t^{-1}B} f(\omega)\, d\mu\, dt=\int_B f\, d\mu. \]
    So $\int_B \overline f\,d\mu=\int_B f\, d\mu$.\par 
    This also means we can establish Birkhoff's ergodic theorem for flows with $\I$, instead of strict invariant sets.
\end{proof}

The reversed flow $\{T_{-t}\}_{t\in\R}$ is again a measure-preserving flow, with the same $\sigma$-algebra $\I$, so Theorems~\ref{t:max} and~\ref{t:birkhoff} apply to it as well.

\begin{lemma}[Differentiation along orbits]\label{l:diff}
Let $E\in\Sigma$. For $\mu$-a.e.\ $\omega\in\Omega$,
\[
\lim_{n\to\infty}n\int_0^{1/n}\chi_E(T_s\omega)\,ds=\chi_E(\omega).
\]
\end{lemma}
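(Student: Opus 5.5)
The plan is to reduce the statement to the one-dimensional Lebesgue differentiation theorem along almost every orbit, using Fubini on $\Omega\times\R$ together with the invariance of $\mu$ under each $T_s$.

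Consider the jointly measurable function $g(\omega,s)=\chi_E(T_s\omega)$ on $\Omega\times\R$; it is measurable because $(\omega,s)\mapsto T_s\omega$ is $\Sigma\otimes\B(\R)$-measurable. For each fixed $\omega$, the function $s\mapsto g(\omega,s)$ is a bounded measurable function on $\R$. By Lebesgue's differentiation theorem (the one-sided version),
\[
\lim_{n\to\infty}n\int_{u}^{u+1/n}g(\omega,s)\,ds=g(\omega,u)
\]
holds for Lebesgue-a.e.\ $u\in\R$. One-sided averages are dominated by twice the centered ones, so they converge at every Lebesgue point. Let
\[
G=\Bigl\{(\omega,u):\ \lim_{n\to\infty}n\int_u^{u+1/n}g(\omega,s)\,ds=g(\omega,u)\Bigr\}.
\]
This set is $\Sigma\otimes\B(\R)$-measurable, since each integral is jointly measurable in $(\omega,u)$ by Fubini. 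By the previous step, every $\omega$-section of the complement $G^c$ has Lebesgue measure zero. Fubini--Tonelli then gives $(\mu\otimes|\cdot|)(G^c\cap(\Omega\times[0,1]))=0$. Hence for a.e.\ $u\in[0,1]$, the $u$-section $G^c_u=\{\omega:(\omega,u)\notin G\}$ is $\mu$-null.

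Now transport the statement back to $u=0$. By the flow property, $g(\omega,u+s)=\chi_E(T_sT_u\omega)$. So $(\omega,u)\in G$ holds exactly when the desired limit identity holds at the point $T_u\omega$. If $N$ denotes the set of $\omega$ where the lemma fails, this says $G^c_u=T_u^{-1}N$. Two measurability points need care here. First, $N$ itself is measurable, because the integrals $n\int_0^{1/n}\chi_E(T_s\omega)\,ds$ are $\Sigma$-measurable by Fubini. Second, $\mu(T_u^{-1}N)=\mu(N)$: since $T_u$ is a bijection with inverse $T_{-u}$ and $\mu\circ T_{-u}=\mu$, the set $T_u^{-1}N=T_{-u}N$ has measure $\mu(N)$. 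Choosing any $u$ with $\mu(G^c_u)=0$ therefore gives $\mu(N)=0$, which proves the lemma.

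The only potential obstacle is this measurability bookkeeping: joint measurability of $G$, and using the correct preimage-versus-image convention for invariance. Both are routine given the standing assumptions on the flow.
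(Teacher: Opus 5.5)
Your proof is correct and follows essentially the same route as the paper. Both arguments apply Lebesgue differentiation along each orbit, use the identity $n\int_u^{u+1/n}\chi_E(T_s\omega)\,ds=n\int_0^{1/n}\chi_E(T_sT_u\omega)\,ds$, and then apply Tonelli on $\Omega\times[0,1]$ together with invariance of $\mu$; the only cosmetic difference is that the paper integrates $\mu(T_t^{-1}D)$ over $t\in[0,1]$ directly, whereas you pick a single $u$ whose section is null, which is equivalent.
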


\begin{proof}
Let $D$ be the set of $\omega$ for which this fails; $D\in\Sigma$. Fix $\omega$. By joint measurability, $\varphi_\omega(s)=\chi_E(T_s\omega)$ is a measurable function on $\R$, and by the group property
\[
n\int_0^{1/n}\chi_E(T_sT_t\omega)\,ds=n\int_t^{t+1/n}\varphi_\omega(s)\,ds .
\]
Hence $T_t\omega\in D$ exactly when $n\int_t^{t+1/n}\varphi_\omega\not\to\varphi_\omega(t)$, and by Lebesgue's differentiation theorem the set of such $t$ is Lebesgue-null. Since $(\omega,t)\mapsto\chi_D(T_t\omega)$ is jointly measurable and each $T_t$ preserves $\mu$, Tonelli's theorem gives
\[
\mu(D)=\int_0^1\mu(T_t^{-1}D)\,dt=\int_\Omega\bigl|\{t\in[0,1]:\ T_t\omega\in D\}\bigr|\,d\mu(\omega)=0. \qedhere
\]
\end{proof}

\subsection{The bounds \texorpdfstring{\eqref{e:solyanik}}{(3.1)} and case~(a)}\label{ss:casea}

The upper bound $C(\alpha)\le1/\alpha$ follows from Theorem~\ref{t:max} applied to $f=\chi_E$. For the lower bound take $E=\Omega$: then $T^*\chi_\Omega\equiv1>\alpha$, so the ratio equals $1$. (More precisely, Lemma~\ref{l:diff} shows that $\mu(\{T^*\chi_E>\alpha\})\ge\mu(E)$ for every $E\in\Sigma$, since a.e.\ point of $E$ admits short windows on which the average of $\chi_E$ along its orbit exceeds $\alpha$.)

Assume now that $\I=\Sigma$, and let $E\in\Sigma$. Since $E\in\I$, for each $t$ we have $\chi_E(T_t\omega)=\chi_E(\omega)$ for a.e.\ $\omega$. The set $\{(\omega,t):\ \chi_E(T_t\omega)\ne\chi_E(\omega)\}$ is $\Sigma\otimes\B(\R)$-measurable, so by Tonelli's theorem, for a.e.\ $\omega$ we have $\chi_E(T_t\omega)=\chi_E(\omega)$ for a.e.\ $t\ge0$. For such $\omega$ every average in the definition of $T^*\chi_E(\omega)$ equals $\chi_E(\omega)$. Hence $T^*\chi_E=\chi_E$ a.e., so $\mu(\{T^*\chi_E>\alpha\})=\mu(E)$, and $C(\alpha)=1$. This proves~(a).

\subsection{Case (b): a probe function}

Assume from now on that $\I\ne\Sigma$, and fix $A\in\Sigma\setminus\I$. In view of~\eqref{e:solyanik}, it suffices to show that for each $\alpha\in(0,1)$ there is $E\in\Sigma$ with $\mu(E)>0$ and
\begin{equation}\label{e:goal}
\mu(\{T^*\chi_E>\alpha\})\ge\frac{\mu(E)}{\alpha}.
\end{equation}

For $h>0$ define
\[
G_h(\omega)=\frac1h\int_0^h\chi_A(T_s\omega)\,ds,\qquad\omega\in\Omega .
\]
By Tonelli's theorem $G_h$ is measurable, and $0\le G_h\le1$. For $\omega\in\Omega$ and $t\in\R$ put $g_\omega(t)=G_h(T_t\omega)$. By the group property,
\[
g_\omega(t)=\frac1h\int_t^{t+h}\chi_A(T_s\omega)\,ds .
\]

\begin{lemma}\label{l:lip}
For every $\omega\in\Omega$ and all $t,t'\in\R$ we have $|g_\omega(t)-g_\omega(t')|\le|t-t'|/h$.
\end{lemma}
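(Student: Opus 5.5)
The plan is to compare the two windows defining $g_\omega(t)$ and $g_\omega(t')$ directly, using the integral representation
\[
g_\omega(t)=\frac1h\int_t^{t+h}\varphi(s)\,ds,\qquad \varphi(s)=\chi_A(T_s\omega).
\]
This representation comes from the group property, as noted just before the statement. By joint measurability, $\varphi$ is a measurable function on $\R$ with values in $\{0,1\}$, so all the integrals are well defined for every $\omega$. No almost-everywhere qualification is needed.

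By symmetry assume $t\le t'$. In the first case, $t'-t\ge h$. Since both $g_\omega(t)$ and $g_\omega(t')$ lie in $[0,1]$, their difference is at most $1\le (t'-t)/h$, and we are done.

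In the second case, $t'-t<h$, so the windows $[t,t+h]$ and $[t',t'+h]$ overlap on $[t',t+h]$. Subtracting, the common part cancels and
\[
g_\omega(t')-g_\omega(t)=\frac1h\Bigl(\int_{t+h}^{t'+h}\varphi-\int_t^{t'}\varphi\Bigr).
\]
Each of the two remaining integrals lies in $[0,t'-t]$ because $0\le\varphi\le1$. Their difference therefore has absolute value at most $t'-t$, which gives $|g_\omega(t)-g_\omega(t')|\le|t-t'|/h$.

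A single formula in fact covers both cases, since for any $t\le t'$ one has
\[
\int_{t'}^{t'+h}\varphi-\int_t^{t+h}\varphi=\int_{t+h}^{t'+h}\varphi-\int_t^{t'}\varphi,
\]
and the case split is only a convenience. There is no real obstacle here. The only point needing care is that the identity $G_h(T_t\omega)=\frac1h\int_t^{t+h}\chi_A(T_s\omega)\,ds$ holds for every $\omega$ and not just almost every one. This follows pointwise from $T_sT_t=T_{s+t}$ and the substitution $s\mapsto s+t$, so the bound holds everywhere, as stated.
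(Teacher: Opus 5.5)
Your proof is correct and follows essentially the same route as the paper: you cancel the common part of the two windows to get $g_\omega(t')-g_\omega(t)=\frac1h\bigl(\int_{t+h}^{t'+h}\varphi-\int_t^{t'}\varphi\bigr)$ and bound each integral by $t'-t$. As you observe yourself, that identity holds for all $t\le t'$ whether or not the windows overlap, which is how the paper states it, so your case split is unnecessary.
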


\begin{proof}
Let $t<t'$. Cancelling the common part of the two intervals of integration,
\[
g_\omega(t')-g_\omega(t)=\frac1h\Bigl(\int_{t+h}^{t'+h}\chi_A(T_s\omega)\,ds-\int_t^{t'}\chi_A(T_s\omega)\,ds\Bigr),
\]
and each of the two integrals lies in $[0,t'-t]$.
\end{proof}

\begin{lemma}\label{l:noninv}
There is $h>0$ such that $G_h$ is not $\I$-measurable.
\end{lemma}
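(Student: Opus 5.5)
We argue by contradiction: we assume that $G_h$ is $\I$-measurable for every $h>0$ and deduce that $A\in\I$, contrary to the choice of $A$. The natural route is Lemma~\ref{l:diff}. Apply it with $E=A$ and $h=1/n$: it gives $G_{1/n}\to\chi_A$ $\mu$-a.e. as $n\to\infty$. A pointwise a.e.\ limit of $\I$-measurable functions is $\I$-measurable up to a null set. Since $\I$ contains every $\mu$-null set, it is complete with respect to $\mu$ inside $\Sigma$, so $\chi_A$ is $\I$-measurable and hence $A\in\I$. This is the desired contradiction, and it even shows that some $h$ of the form $1/n$ works.

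The one point that needs care is this completeness claim. We must check that if $f$ is $\Sigma$-measurable and $f=g$ a.e.\ with $g$ $\I$-measurable, then $f$ is $\I$-measurable. For a level set we have $\{f>a\}\,\triangle\,\{g>a\}\subseteq N$ with $\mu(N)=0$. Since $T_t$ preserves $\mu$, it follows that $\mu(T_t\{f>a\}\,\triangle\,T_t\{g>a\})\le\mu(T_tN)$. Here we must be careful about whether the images $T_tN$ are measurable. They are, because $T_t$ is invertible with measurable inverse $T_{-t}$, so $T_tN=T_{-t}^{-1}N$ is measurable and null. The triangle inequality for $\triangle$ then gives $\mu(T_t\{f>a\}\triangle\{f>a\})=0$ for every $t$. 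To be safe, we will phrase everything via $T_t^{-1}$ in the same way.

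To define ``$\I$-measurable'' for the limit cleanly, we will set $\tilde G=\limsup_n G_{1/n}$. This is a pointwise limsup of $\I$-measurable functions, so it is $\I$-measurable, because $\I$ is a $\sigma$-algebra. Then $\tilde G=\chi_A$ a.e., and the completeness step finishes the argument.

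I do not expect a real obstacle. The key input is already Lemma~\ref{l:diff}, and the remaining bookkeeping is only verifying that $\I$ is a $\sigma$-algebra closed under a.e.\ modification, which the paper asserts.
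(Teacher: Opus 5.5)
Your proposal is correct and is essentially the paper's argument. Lemma~\ref{l:diff} gives $G_{1/n}\to\chi_A$ $\mu$-a.e., and since $\I$ is a $\sigma$-algebra containing the null sets, the a.e.\ limit $\chi_A$ is $\I$-measurable, contradicting $A\notin\I$. Your extra bookkeeping (the $\limsup$ and the closure of $\I$ under null modifications) just makes explicit what the paper states in one line.
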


\begin{proof}
Suppose that $G_{1/n}$ is $\I$-measurable for every $n\in\N$. By Lemma~\ref{l:diff}, $G_{1/n}\to\chi_A$ $\mu$-a.e. Since $\I$ contains all null sets, an a.e.\ limit of $\I$-measurable functions is $\I$-measurable; hence $\chi_A$ is $\I$-measurable, that is, $A\in\I$, a contradiction.
\end{proof}

We fix $h$ as in Lemma~\ref{l:noninv} and write $G=G_h$; from now on $g_\omega$ refers to this $G$.

\begin{lemma}\label{l:levels}
There exist rational numbers $a<b$ and a set $P\in\Sigma$ with $\mu(P)>0$ such that
\[
\E[\chi_{\{G<a\}}\mid\I]>0\quad\text{and}\quad\E[\chi_{\{G>b\}}\mid\I]>0\qquad\text{on }P.
\]
\end{lemma}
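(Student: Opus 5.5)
We argue by contradiction, using only that $G$ is not $\I$-measurable (Lemma~\ref{l:noninv}). For rationals $a<b$ put
\[
P_{a,b}=\{\E[\chi_{\{G<a\}}\mid\I]>0\}\cap\{\E[\chi_{\{G>b\}}\mid\I]>0\},
\]
where we fix versions of the conditional expectations. Each $P_{a,b}$ belongs to $\I\subset\Sigma$. Suppose, contrary to the claim, that $\mu(P_{a,b})=0$ for every rational pair $a<b$. We shall deduce that $G$ agrees a.e.\ with an $\I$-measurable function. This contradicts Lemma~\ref{l:noninv}, because $\I$ contains all null sets.

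The key step is a pointwise description of $G$ off a null set. For rational $c$ let $B_c=\{\E[\chi_{\{G<c\}}\mid\I]>0\}\in\I$. Then
\[
\mu(\{G<c\}\setminus B_c)=\int_{\{G<c\}}\chi_{B_c^c}\,d\mu=\int_{B_c^c}\E[\chi_{\{G<c\}}\mid\I]\,d\mu=0,
\]
since $B_c^c\in\I$ and the integrand vanishes there. Thus a.e.\ point with $G<c$ lies in $B_c$. In the same way, with $D_c=\{\E[\chi_{\{G>c\}}\mid\I]>0\}$, a.e.\ point with $G>c$ lies in $D_c$. Let $N$ be the union over all rationals $a<b$ and $c$ of $P_{a,b}$ and of the exceptional sets $\{G<c\}\setminus B_c$ and $\{G>c\}\setminus D_c$. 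Then $N$ is null.

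Next define the $\I$-measurable function
\[
H(\omega)=\inf\{c\in\Q:\ \omega\in B_c\}.
\]
Fix $\omega\notin N$. If $c>G(\omega)$ is rational, then $\omega\in\{G<c\}$, hence $\omega\in B_c$. This gives $H(\omega)\le G(\omega)$. Conversely, suppose $H(\omega)<G(\omega)$. Choose rationals $c<b$ with $H(\omega)<c<b<G(\omega)$, where $c$ is chosen with $\omega\in B_c$. Since $G(\omega)>b$ we also have $\omega\in D_b$. Then $\omega\in B_c\cap D_b=P_{c,b}\subset N$, which is a contradiction. Hence $G=H$ off $N$, so $G$ is $\I$-measurable modulo null sets, contradicting Lemma~\ref{l:noninv}.

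Consequently some $P_{a,b}$ has positive measure, and taking $P=P_{a,b}$ gives the lemma. The only delicate point is the bookkeeping with versions of conditional expectations and with the countably many null sets. In particular, the sets $P_{a,b}$, $B_c$ and $D_c$ lie in $\I$, and the identity $\int_{B^c}\chi_{\{G<c\}}=\int_{B^c}\E[\chi_{\{G<c\}}\mid\I]$ requires $B^c\in\I$. No flow structure beyond Lemma~\ref{l:noninv} is needed.
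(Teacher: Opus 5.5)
Your proof is correct and is essentially the paper's argument in contrapositive form: your $H$ plays the role of the paper's lower function $m$, with the same null-set computation giving $H\le G$ a.e. The assumed nullness of all $P_{a,b}$, together with the exceptional sets for $D_b$, replaces the paper's upper function $M$ and its decomposition of $\{m<M\}$ over rational pairs. There is one cosmetic slip: the $B_c$ need not be nested pointwise for fixed versions, so you cannot always get $H(\omega)<c$ strictly, but you only need some rational $c<G(\omega)$ with $\omega\in B_c$, which exists because $H(\omega)<G(\omega)$.
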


\begin{proof}
For $q\in\Q$ fix nonnegative $\I$-measurable versions $\varphi_q$ of $\E[\chi_{\{G<q\}}\mid\I]$ and $\psi_q$ of $\E[\chi_{\{G>q\}}\mid\I]$, and define the $\I$-measurable functions
\[
m(\omega)=\sup\{q\in\Q:\ \varphi_q(\omega)=0\},\qquad M(\omega)=\inf\{q\in\Q:\ \psi_q(\omega)=0\},
\]
with values in $[-\infty,+\infty]$. For $q\in\Q$ let $S_q=\{\varphi_q=0\}\in\I$. By the defining property of conditional expectation,
\[
\mu(S_q\cap\{G<q\})=\int_{S_q}\chi_{\{G<q\}}\,d\mu=\int_{S_q}\varphi_q\,d\mu=0 .
\]
If $G(\omega)<m(\omega)$, there is $q\in\Q$ with $q>G(\omega)$ and $\varphi_q(\omega)=0$, so that $\omega\in S_q\cap\{G<q\}$. Hence $\{G<m\}$ is contained in a countable union of null sets, and $G\ge m$ a.e. Symmetrically, $G\le M$ a.e.

If $m=M$ a.e., then $G=m$ a.e., and $G$ would be $\I$-measurable (again because $\I$ contains the null sets), contradicting Lemma~\ref{l:noninv}. Thus $\mu(\{m<M\})>0$. Since
\[
\{m<M\}=\bigcup_{a,b\in\Q,\ a<b}\bigl(\{m<a\}\cap\{b<M\}\bigr),
\]
there are rationals $a<b$ such that $P=\{m<a\}\cap\{b<M\}$ has positive measure. If $\omega\in P$, then $a>m(\omega)$, so $\varphi_a(\omega)\ne0$, that is, $\varphi_a(\omega)>0$; likewise $b<M(\omega)$ gives $\psi_b(\omega)>0$.
\end{proof}

We fix $a<b$ and $P$ as in Lemma~\ref{l:levels} and set
\[
\ell=(b-a)h>0 .
\]
Note that $P\in \I$, since $m$ and $M$ are $\I$-measurable; in particular, if the flow is ergodic, then $\mu(P)=1$.

\subsection{Upcrossings}

\begin{definition}
Let $\omega\in\Omega$. A time $t\in\R$ is an \emph{upcrossing time} of $\omega$ if $g_\omega(t)=b$ and there is $s<t$ such that $g_\omega(s)\le a$ and $g_\omega(u)<b$ for all $u\in[s,t)$. We denote by $R_\omega\subset\R$ the set of upcrossing times of $\omega$.
\end{definition}

Thus $t\in R_\omega$ when, at time $t$, the function $g_\omega$ completes a passage from the level $a$ up to the level $b$. Merely reaching the level $b$ does not suffice; this two-level condition is what makes the upcrossing times well separated.

\begin{lemma}\label{l:R}
\begin{enumerate}
\item[(R1)] $R_{T_x\omega}=R_\omega-x$ for all $\omega\in\Omega$ and $x\in\R$.
\item[(R2)] Distinct points of $R_\omega$ are at distance at least $2\ell$; in particular $R_\omega$ is locally finite.
\item[(R3)] For $\mu$-a.e.\ $\omega\in P$ we have $R_\omega\cap(-\infty,0]\ne\varnothing$.
\end{enumerate}
\end{lemma}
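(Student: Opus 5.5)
The three parts are of different nature: (R1) is a formal consequence of the group property, (R2) follows from the Lipschitz bound in Lemma~\ref{l:lip}, and (R3) is where Birkhoff's theorem (Theorem~\ref{t:birkhoff}) for the reversed flow enters together with Lemma~\ref{l:levels}. I would prove them in that order.

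For (R1), the group property gives $g_{T_x\omega}(t)=G(T_tT_x\omega)=g_\omega(t+x)$. Both defining conditions for an upcrossing time are stated purely in terms of the values of $g$, with $s$ ranging over reals. Hence $t\in R_{T_x\omega}$ if and only if $t+x\in R_\omega$, witnessed by $s+x$.

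For (R2), let $t<t'$ be two upcrossing times, and let $s'<t'$ be a witness for $t'$. Then $g_\omega(s')\le a$ and $g_\omega<b$ on $[s',t')$. Since $g_\omega(t)=b$, the point $t$ does not lie in $[s',t')$, and $t<t'$, so $t<s'$. By Lemma~\ref{l:lip},
\[
b-a\le g_\omega(t)-g_\omega(s')\le (s'-t)/h,
\]
so $s'-t\ge\ell$. Likewise $g_\omega(t')-g_\omega(s')\ge b-a$ gives $t'-s'\ge\ell$. Adding the two bounds yields $t'-t\ge2\ell$, and local finiteness is immediate.

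For (R3), apply Theorem~\ref{t:birkhoff} to the reversed flow with $f=\chi_{\{G<a\}}$ and $f=\chi_{\{G>b\}}$. For a.e.\ $\omega$, the averages $\frac1L\int_0^L\chi_{\{G<a\}}(T_{-t}\omega)\,dt$ converge to $\E[\chi_{\{G<a\}}\mid\I](\omega)$, and similarly for $\{G>b\}$. On $P$ both limits are positive by Lemma~\ref{l:levels}, so both sets $\{t\le0:\ g_\omega(t)<a\}$ and $\{t\le0:\ g_\omega(t)>b\}$ are unbounded below. One point requires care: the conditional expectations are only defined a.e., so I fix versions and discard a null set on which Birkhoff fails. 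With that done, pick $u\le0$ with $g_\omega(u)>b$, and then $s<u$ with $g_\omega(s)<a$. Set
\[
t=\inf\{v\ge s:\ g_\omega(v)\ge b\}.
\]
This infimum is at most $u\le0$. By continuity (Lemma~\ref{l:lip}), $g_\omega(t)=b$ and $g_\omega<b$ on $[s,t)$, so $t\in R_\omega\cap(-\infty,0]$.

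I expect the main obstacle to be this last step, specifically getting the order right: the low time $s$ must precede the high time $u$. Both sets being unbounded below makes this possible, since I can choose $u$ first and then choose $s$ further to the left.
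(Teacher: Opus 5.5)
Your proposal is correct and follows the paper's proof essentially step by step. You use the same shift identity for (R1), the same Lipschitz argument through the witness $s'$ for (R2), and for (R3) the same reversed-flow Birkhoff argument followed by taking the first time $g_\omega$ reaches level $b$ after a low point. The only difference is cosmetic: you use the low point $s$ itself as the witness, whereas the paper also introduces $s^*=\max\{s\in[-t_1,u^*]:\ g_\omega(s)\le a\}$, which is not actually needed.
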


\begin{proof}
(R1) follows from the identity $g_{T_x\omega}(t)=g_\omega(t+x)$.

(R2) Let $t<t'$ be points of $R_\omega$, and let $s'<t'$ be as in the definition for $t'$. Since $g_\omega(t)=b$ while $g_\omega<b$ on $[s',t')$, we have $t<s'$. By Lemma~\ref{l:lip}, passing from the value $b$ at time $t$ to a value $\le a$ at time $s'$ takes time at least $(b-a)h=\ell$, and passing from a value $\le a$ at time $s'$ to the value $b$ at time $t'$ takes time at least $\ell$. Hence $t'-t\ge2\ell$.

(R3) Applying Theorem~\ref{t:birkhoff} to the reversed flow with $f=\chi_{\{G<a\}}$ and with $f=\chi_{\{G>b\}}$, we obtain, for a.e.\ $\omega$,
\begin{align*}
\frac1L\bigl|\{t\in[0,L]:\ g_\omega(-t)<a\}\bigr|&\to\E[\chi_{\{G<a\}}\mid\I](\omega),\\
\frac1L\bigl|\{t\in[0,L]:\ g_\omega(-t)>b\}\bigr|&\to\E[\chi_{\{G>b\}}\mid\I](\omega)
\end{align*}
as $L\to\infty$($\{t\geq 0: g_\omega(-t)<a\}=\{t\geq 0: G(T_{-t}\omega)<a\}=\{t:T_{-t}\omega\in \{G<a\}\}$, so $\frac{1}{L}\int_0^L\chi_{\{G<a\}}(T_{-t}\omega)\,dt=\frac{1}{L}\left\lvert \left\{t\in [0,L]:T_{-t}\omega\in \{G<a\}\right\} \right\rvert =\frac1L\left\lvert \{t\in[0,L]:\ g_\omega(-t)<a\}\right\rvert $). For a.e.\ $\omega\in P$ both limits are positive, so the sets $\{t\ge0:\ g_\omega(-t)<a\}$ and $\{t\ge0:\ g_\omega(-t)>b\}$ are both unbounded, since a subset of $[0,K]$ occupies at most the proportion $K/L$ of $[0,L]$. Fix such an $\omega$, choose $t_2\ge0$ with $g_\omega(-t_2)>b$ and then $t_1>t_2$ with $g_\omega(-t_1)<a$, and set
\[
u^*=\inf\{u\ge-t_1:\ g_\omega(u)\ge b\},\qquad s^*=\max\{s\in[-t_1,u^*]:\ g_\omega(s)\le a\}.
\]
By continuity, $u^*\in(-t_1,-t_2]$, $g_\omega(u^*)=b$, $g_\omega(s^*)\le a$, and $g_\omega<b$ on $[s^*,u^*)$. Hence $u^*\in R_\omega\cap(-\infty,0]$.
\end{proof}

\subsection{The height function}

For $\omega\in\Omega$ let
\[
t_0(\omega)=\min\{t\ge0:\ -t\in R_\omega\}\in[0,\infty],
\]
with $t_0(\omega)=\infty$ if $R_\omega\cap(-\infty,0]=\varnothing$; the minimum exists by~(R2). Thus $t_0(\omega)$ is the time elapsed since the last upcrossing, and we call it the \emph{height} of $\omega$.

\begin{lemma}[Sawtooth lemma]\label{l:saw}
Let $\omega\in\Omega$ with $t_0(\omega)=t<\infty$.
\begin{enumerate}
\item[(a)] If $0\le x<2\ell-t$, then $t_0(T_x\omega)=t+x$.
\item[(b)] If $0\le x\le t$, then $t_0(T_{-x}\omega)=t-x$.
\end{enumerate}
\end{lemma}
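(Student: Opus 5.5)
Both parts follow from the translation rule (R1) and the separation property (R2) of Lemma~\ref{l:R}. By (R1), $R_{T_y\omega}=R_\omega-y$ for every $y\in\R$. Hence
\[
t_0(T_y\omega)=\min\{s\ge0:\ -s\in R_\omega-y\}=\min\{s\ge0:\ y-s\in R_\omega\}.
\]
So $t_0(T_y\omega)$ is the distance from $y$ back to the largest point of $R_\omega$ in $(-\infty,y]$. This reduces both statements to locating the points of $R_\omega$ near $0$. Since $t_0(\omega)=t$, the point $-t$ lies in $R_\omega$, and by definition of the minimum no point of $R_\omega$ lies in $(-t,0]$.

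For (a), take $0\le x<2\ell-t$ and $y=x$. We need the largest point of $R_\omega$ in $(-\infty,x]$ to be $-t$. On $(-t,0]$ this holds by the minimality of $t$. On $(0,x]$ we use (R2): any $r\in R_\omega$ with $r>-t$ satisfies $r\ge -t+2\ell>x$. So $R_\omega\cap(-t,x]=\varnothing$, and since $-t\le x$ we get $t_0(T_x\omega)=x-(-t)=t+x$. Note the hypothesis forces $t<2\ell$, so the range of $x$ is nonempty exactly when it should be.

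For (b), take $0\le x\le t$ and $y=-x$. Then $-t\le -x$, so $-t\in R_\omega\cap(-\infty,-x]$. Moreover $(-t,-x]\subset(-t,0]$, which contains no point of $R_\omega$. So the largest point of $R_\omega$ in $(-\infty,-x]$ is $-t$, and $t_0(T_{-x}\omega)=-x-(-t)=t-x$.

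There is no real obstacle here; the one point needing care is the bookkeeping of signs and of the closed/open endpoints. In particular, (a) uses the strict inequality $x<2\ell-t$ together with the non-strict separation $\ge2\ell$ from (R2), so that the next upcrossing after $-t$ lies strictly beyond $x$.
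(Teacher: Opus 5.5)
Your proof is correct and is essentially the paper's argument. Both use (R1) to turn the question into whether $R_\omega$ meets $(-t,x]$, respectively $(-t,-x]$. The separation property (R2) rules out such points in (a), and the minimality in the definition of $t_0$ rules them out in (b).
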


\begin{proof}
(a) By~(R1), $-(t+x)\in R_{T_x\omega}$, and the points of $R_{T_x\omega}$ in $(-(t+x),0]$ correspond to the points of $R_\omega$ in $(-t,x]$. Since $-t\in R_\omega$, by~(R2) there are no points of $R_\omega$ in $(-t,-t+2\ell)$, and this interval contains $(-t,x]$ because $x<2\ell-t$. Hence $R_{T_x\omega}\cap (-(x+t),0]=\emptyset$. So $t_0(T_x \omega)=t+x$.

(b) As in (a), by~(R1) we have $-t+x\in R_{T_{-x}\omega}$, and the points of $R_{T_{-x}\omega}$ in $(-t+x,0]$ correspond to the points of $R_\omega$ in $(-t,-x]\subset(-t,0]$. There are none, by the definition of $t_0$. So $t_0(T_{-x}\omega)=t-x$.
\end{proof}

Thus along every orbit the height increases with unit speed and is reset to $0$ at each upcrossing time, and consecutive resets are at least $2\ell$ apart.

\subsection{Measurability of the height}

Since the definition of $t_0$ involves uncountably many times, its measurability requires an argument. For $\omega\in\Omega$ and $t\ge0$ write
\[
\Phi(\omega,t)=G(T_{-t}\omega)=g_\omega(-t),
\]
the value of the probe $t$ units of time in the past. The function $\Phi$ is $\Sigma\otimes\B([0,\infty))$-measurable, $0\le\Phi\le1$, and for each $\omega$ the function $t\mapsto\Phi(\omega,t)$ is $1/h$-Lipschitz by Lemma~\ref{l:lip}.

Let
\[
\sigma(\omega)=\inf\{t\ge0:\ \Phi(\omega,t)\le a\}\in[0,\infty]
\]
be the time back to the last low point, with $\sigma(\omega)=\infty$ if there is none; by continuity the infimum is attained when $\sigma(\omega)<\infty$. Let $H$ be the set of those $\omega$ with $\sigma(\omega)<\infty$ for which $\Phi(\omega,t)\ge b$ for some $t\in[0,\sigma(\omega)]$, and for $\omega\in H$ define
\[
\tau(\omega)=\max\{t\in[0,\sigma(\omega)]:\ \Phi(\omega,t)\ge b\},
\]
the maximum being attained by continuity. In words, $\omega\in H$ means that since the last time the probe was at or below the level $a$ it has already reached the level $b$, and $\tau(\omega)$ is the time elapsed since it first did so.

\begin{lemma}\label{l:meas}
The function $\sigma$ is measurable, $H\in\Sigma$, and $\tau\colon H\to[0,\infty)$ is measurable.
\end{lemma}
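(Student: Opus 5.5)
The plan is to exploit the continuity of $t\mapsto\Phi(\omega,t)$ (it is $1/h$-Lipschitz by Lemma~\ref{l:lip}) to replace every infimum or maximum over uncountably many times by countable operations over rational times, together with the joint measurability of $\Phi$. The latter gives, for each fixed $t\ge0$, the measurability of $\omega\mapsto\Phi(\omega,t)$ as a section of a $\Sigma\otimes\B([0,\infty))$-measurable function.

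\emph{Measurability of $\sigma$.} For $r\ge0$ I will show
\[
\{\sigma\le r\}=\bigcap_{k\ge1}\ \bigcup_{q\in\Q\cap[0,r]}\{\Phi(\cdot,q)<a+1/k\}.
\]
The inclusion "$\subseteq$" uses continuity: if $\Phi(\omega,s)\le a$ with $s\le r$, rationals $q\in[0,r]$ near $s$ give $\Phi(\omega,q)<a+1/k$. (For $r=0$ take $q=0$.) For "$\supseteq$", choose $q_k\in[0,r]$ with $\Phi(\omega,q_k)<a+1/k$ and pass to a convergent subsequence $q_k\to s\in[0,r]$. Continuity then gives $\Phi(\omega,s)\le a$, so $\sigma(\omega)\le r$. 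The right-hand side is a countable Boolean combination of sets in $\Sigma$, so $\sigma$ is measurable as a $[0,\infty]$-valued map, and $\{\sigma<\infty\}\in\Sigma$.

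\emph{Measurability of $H$.} For $\omega$ with $\sigma(\omega)<\infty$, compactness of $[0,\sigma(\omega)]$ and continuity give
\[
\omega\in H\iff \forall k\ \exists q\in\Q\cap[0,\infty):\ q\le\sigma(\omega)+1/k,\ \Phi(\omega,q)>b-1/k.
\]
Indeed, if the right-hand side holds, the $q_k$ accumulate at some $s\le\sigma(\omega)$ with $\Phi(\omega,s)\ge b$. Conversely, a point $s\in[0,\sigma(\omega)]$ with $\Phi(\omega,s)\ge b$ is approximated by rationals. Each set $\{q\le\sigma+1/k\}\cap\{\Phi(\cdot,q)>b-1/k\}$ is in $\Sigma$ by the previous step, so $H\in\Sigma$.

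\emph{Measurability of $\tau$.} On $H$, for $r\ge0$,
\[
\{\tau\ge r\}\cap H=H\cap\{\sigma\ge r\}\cap\bigcap_{k}\ \bigcup_{q\in\Q,\ r-1/k<q<\sigma+1/k}\{\Phi(\cdot,q)>b-1/k\}.
\]
This holds by the same compactness-and-continuity argument applied on the interval $[r,\sigma(\omega)]$. The condition $q<\sigma(\omega)+1/k$ is measurable in $\omega$ because $\sigma$ is, so the intersection is again a countable combination of measurable sets.

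The main obstacle is the closed-versus-open boundary issue. The defining conditions use the non-strict inequalities $\le a$ and $\ge b$ on a closed interval whose right endpoint $\sigma(\omega)$ itself varies with $\omega$. This is why I use $1/k$ slack in both the level and the endpoint, and then recover exactness through a limiting argument, using compactness of $[0,\sigma(\omega)]$ and continuity of $\Phi(\omega,\cdot)$. Care is needed only at the endpoint $\sigma(\omega)$. Since $\Phi(\omega,\sigma(\omega))\le a<b$, the maximum defining $\tau$ is in fact attained strictly before $\sigma(\omega)$, which makes the limiting argument harmless there.
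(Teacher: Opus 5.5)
Your proof is correct and takes essentially the paper's route: continuity of $t\mapsto\Phi(\omega,t)$ reduces every condition to countably many rational times, and the variable endpoint $\sigma(\omega)$ is handled through the measurability of $\sigma$. The paper packages this into the function $M_y(\omega)=\max_{t\in[y,\sigma(\omega)]}\Phi(\omega,t)$, evaluated exactly at the endpoint via the composition $\omega\mapsto\Phi(\omega,\sigma(\omega))$, whereas your $1/k$ slack in level and endpoint, together with a compactness argument, avoids that composition; the difference is cosmetic, and in the $\tau$ formula you should keep $q\ge 0$ as you did for $H$, since $\Phi$ is only defined on $\Omega\times[0,\infty)$.
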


\begin{proof}
For $x\ge0$, by continuity,
\[
\{\sigma\le x\}=\Bigl\{\omega:\ \inf_{r\in(\Q\cap[0,x])\cup\{x\}}\Phi(\omega,r)\le a\Bigr\}\in\Sigma .
\]
Fix $y\ge0$, let $\Omega_y=\{y\le\sigma<\infty\}\in\Sigma$, and define on $\Omega_y$
\[
M_y(\omega)=\max_{t\in[y,\sigma(\omega)]}\Phi(\omega,t)=\max\Bigl\{\Phi(\omega,y),\ \Phi(\omega,\sigma(\omega)),\ \sup_{q\in\Q}f_q(\omega)\Bigr\},
\]
where $f_q(\omega)=\Phi(\omega,q)$ if $y<q<\sigma(\omega)$ and $f_q(\omega)=0$ otherwise; the second equality holds by continuity and because $\Phi\ge0$. Here $\omega\mapsto\Phi(\omega,y)$ is measurable for fixed $y$; the function $\omega\mapsto\Phi(\omega,\sigma(\omega))$ is measurable on $\Omega_y$, being the composition of the measurable map $\omega\mapsto(\omega,\sigma(\omega))$ with $\Phi$; and each $f_q$ is measurable because $\{\sigma>q\}\in\Sigma$. Hence $M_y$ is measurable on $\Omega_y$.

Now $H=\Omega_0\cap\{M_0\ge b\}\in\Sigma$. Moreover, for $\omega\in H$ and $y\ge0$, since the maximum defining $\tau(\omega)$ is attained, we have $\tau(\omega)\ge y$ if and only if $\sigma(\omega)\ge y$ and $M_y(\omega)\ge b$. Thus
\[
\{\omega\in H:\ \tau(\omega)\ge y\}=H\cap\Omega_y\cap\{M_y\ge b\}\in\Sigma\qquad\text{for every }y\ge0,
\]
and $\tau$ is measurable on $H$.
\end{proof}

\begin{lemma}\label{l:tau}
\begin{itemize}
\item[(i)] If $\omega\in H$, then $t_0(\omega)=\tau(\omega)$.
\item[(ii)] If $t_0(\omega)<\ell$, then $\omega\in H$.
\end{itemize}
\end{lemma}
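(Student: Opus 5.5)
Both parts are translations between the backward-looking description of $\omega\in H$ (in terms of $\Phi(\omega,\cdot)=g_\omega(-\cdot)$) and the definition of an upcrossing time. We use only the continuity of $g_\omega$ and the Lipschitz bound of Lemma~\ref{l:lip}.

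\textbf{(i).} Let $\omega\in H$, and set $\sigma=\sigma(\omega)$ and $\tau=\tau(\omega)\le\sigma$.

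First we show $-\tau\in R_\omega$. By maximality of $\tau$ and continuity, $g_\omega(-\tau)\ge b$. We also have $g_\omega(-t)<b$ for $t\in(\tau,\sigma]$, and $g_\omega(-\sigma)\le a<b$, so $\tau<\sigma$. Continuity then forces $g_\omega(-\tau)=b$. Taking $s=-\sigma$ in the definition, we have $g_\omega(s)\le a$ and $g_\omega(u)<b$ on $[-\sigma,-\tau)$. Hence $-\tau\in R_\omega$, and so $t_0(\omega)\le\tau$.

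Next we show there is no upcrossing time in $(-\tau,0]$. Suppose $-t\in R_\omega$ with $0\le t<\tau$. Let $s<-t$ be its witness, so $g_\omega(s)\le a$ and $g_\omega<b$ on $[s,-t)$. By definition of $\sigma$, $g_\omega(-r)>a$ for every $r\in[0,\sigma)$, so $s\le-\sigma$. Then $[s,-t)$ contains $-\tau$, where $g_\omega=b$. This is a contradiction, so $t_0(\omega)=\tau$.

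\textbf{(ii).} Suppose $t:=t_0(\omega)<\ell$. Then $-t\in R_\omega$, with witness $s<-t$ such that $g_\omega(s)\le a$, $g_\omega(-t)=b$ and $g_\omega<b$ on $[s,-t)$. In particular $\Phi(\omega,t)=b$.

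The key claim is that $\sigma(\omega)\ge t$. Suppose instead that $g_\omega(-r)\le a$ for some $r\in[0,t)$. By the Lipschitz bound, the passage from $g_\omega(-t)=b$ to a value $\le a$ at $-r$ takes time at least $(b-a)h=\ell$. So $t-r\ge\ell$, which contradicts $t<\ell$.

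Moreover $\sigma(\omega)\le -s<\infty$, because $g_\omega(s)\le a$ with $-s\ge0$. Thus $\sigma(\omega)$ is finite and $t\in[0,\sigma(\omega)]$ with $\Phi(\omega,t)\ge b$, which means $\omega\in H$.

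The only delicate point is the estimate $\sigma\ge t$ in (ii); it is exactly where the hypothesis $t_0<\ell$ and Lemma~\ref{l:lip} enter.
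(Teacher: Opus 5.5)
Your proof is correct and follows the paper's argument essentially step for step: in (i) you show $-\tau\in R_\omega$ with witness $s=-\sigma$ and rule out later upcrossings because any witness must satisfy $s\le-\sigma$, and in (ii) you use Lemma~\ref{l:lip} to keep $g_\omega$ above $a$ on $[-t_0(\omega),0]$. The only difference is cosmetic: you obtain $\sigma(\omega)\ge t_0(\omega)$ rather than the strict inequality the paper records, and the non-strict version is all that membership in $H$ requires.
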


\begin{proof}
(i) Write $\sigma=\sigma(\omega)$, $\tau=\tau(\omega)$ and $g=g_\omega$. Since $\Phi(\omega,\sigma)\le a<b$ we have $\tau<\sigma$, and since $\Phi(\omega,t)<b$ for $t\in(\tau,\sigma]$, continuity gives $g(-\tau)=b$. Moreover $g(-\sigma)\le a$ and $g<b$ on $[-\sigma,-\tau)$, so $-\tau\in R_\omega$ and $t_0(\omega)\le\tau$. Suppose that some $u\in(-\tau,0]$ belonged to $R_\omega$, and let $s<u$ be as in the definition. Since $g>a$ on $(-\sigma,0]$ by the definition of $\sigma$, we must have $s\le-\sigma<-\tau<u$, so $-\tau\in[s,u)$ and therefore $g(-\tau)<b$, a contradiction. Hence $t_0(\omega)\ge\tau$.

(ii) Write $t_0=t_0(\omega)$. Since $-t_0$ is an upcrossing time, there is $s<-t_0$ with $g_\omega(s)\le a$, so $\sigma(\omega)\le-s<\infty$. By Lemma~\ref{l:lip}, $g_\omega\ge b-t_0/h>b-\ell/h=a$ on $[-t_0,0]$, so $\sigma(\omega)>t_0$. As $\Phi(\omega,t_0)=g_\omega(-t_0)=b$, we conclude that $\omega\in H$.
\end{proof}

\begin{proposition}\label{p:t0meas}
For every $\omega\in\Omega$,
\[
t_0(\omega)=\inf\bigl\{\,q+\tau(T_{-q}\omega):\ q\in\Q,\ q\ge0,\ T_{-q}\omega\in H\,\bigr\},
\]
with the convention $\inf\varnothing=\infty$. In particular, $t_0$ is measurable.
\end{proposition}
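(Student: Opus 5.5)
We prove the identity by establishing the two inequalities separately; measurability then comes for free. Write $J(\omega)$ for the infimum on the right-hand side.

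\emph{Step 1: $t_0\le J$.} Let $q\in\Q$, $q\ge0$, with $T_{-q}\omega\in H$. By Lemma~\ref{l:tau}(i), $t_0(T_{-q}\omega)=\tau(T_{-q}\omega)<\infty$, so $-\tau(T_{-q}\omega)\in R_{T_{-q}\omega}$. By (R1), $R_{T_{-q}\omega}=R_\omega+q$. Hence $-q-\tau(T_{-q}\omega)\in R_\omega\cap(-\infty,0]$, and the definition of $t_0$ gives $t_0(\omega)\le q+\tau(T_{-q}\omega)$. Taking the infimum over admissible $q$ yields $t_0\le J$. This also covers the case $t_0(\omega)=\infty$: then no such $q$ exists, so $J=\infty$ as well.

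\emph{Step 2: $J\le t_0$ when $t_0(\omega)=t<\infty$.} We approximate $t$ by rationals $q\in[0,t]$ with $t-q<\ell$; a sequence of such $q$ tends to $t$ from below, and if $t=0$ we take $q=0$. By the Sawtooth Lemma~\ref{l:saw}(b), $t_0(T_{-q}\omega)=t-q<\ell$. Lemma~\ref{l:tau}(ii) then gives $T_{-q}\omega\in H$, and Lemma~\ref{l:tau}(i) gives $\tau(T_{-q}\omega)=t-q$. So $q+\tau(T_{-q}\omega)=t$ is attained in the infimum, and $J\le t$. (We do not even need $q\to t$, since any single such $q$ gives equality.)

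\emph{Step 3: measurability.} For each fixed rational $q\ge0$, the map $\omega\mapsto T_{-q}\omega$ is measurable, so $\{\omega: T_{-q}\omega\in H\}\in\Sigma$ by Lemma~\ref{l:meas}. Consider the function that equals $q+\tau(T_{-q}\omega)$ on this set and $\infty$ elsewhere; it is measurable, again by Lemma~\ref{l:meas}. Then $t_0$ is a countable infimum of measurable $[0,\infty]$-valued functions, and is therefore measurable.

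The only delicate point is Step~2. There one must check that the hypotheses of Lemma~\ref{l:saw}(b) hold, namely $0\le q\le t$. One must also pick $q$ close enough to $t$ that the height drops below $\ell$, which is what allows Lemma~\ref{l:tau}(ii) to apply. Everything else is bookkeeping with (R1).
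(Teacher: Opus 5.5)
Your proof is correct and follows the paper's argument essentially line by line. For $t_0\le J$ you use Lemma~\ref{l:tau}(i) with (R1) to place $-(q+\tau(T_{-q}\omega))$ in $R_\omega$; for the reverse inequality you take a single rational $q\in[0,t]$ with $q>t-\ell$ and apply Lemma~\ref{l:saw}(b) followed by Lemma~\ref{l:tau}; and measurability follows by writing $t_0$ as a countable infimum of measurable functions.
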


\begin{proof}
If $q\ge0$, $T_{-q}\omega\in H$ and $v=\tau(T_{-q}\omega)$, then $-v\in R_{T_{-q}\omega}=R_\omega+q$ by Lemma~\ref{l:tau}(i) and~(R1); hence $-(q+v)\in R_\omega$ and $q+v\ge t_0(\omega)$. Conversely, suppose that $t=t_0(\omega)<\infty$ and choose rational $q\in[0,t]$ with $q>t-\ell$. By Lemma~\ref{l:saw}(b), $t_0(T_{-q}\omega)=t-q<\ell$, so by Lemma~\ref{l:tau} we have $T_{-q}\omega\in H$ and $q+\tau(T_{-q}\omega)=t$. Measurability follows because, for each $q$, the function equal to $q+\tau(T_{-q}\omega)$ on the measurable set $\{\omega:\ T_{-q}\omega\in H\}$ and to $\infty$ elsewhere is measurable.
\end{proof}

\subsection{Distribution of the height}

\begin{lemma}\label{l:uniform}
Let $\lambda=\mu(\{t_0<\ell\})/\ell$. Then $\lambda>0$, and
\[
\mu(\{t_0\in[p,q)\})=\lambda(q-p)\qquad\text{whenever }0\le p<q\le\ell .
\]
\end{lemma}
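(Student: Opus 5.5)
The plan is to show that the distribution function $F(x)=\mu(\{t_0<x\})$, $0\le x\le\ell$, is linear, and then to check that $\lambda>0$. Linearity will come from translation invariance of $\mu$ together with the sawtooth lemma. Measurability of all the sets involved is supplied by Proposition~\ref{p:t0meas}.

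\textbf{Step 1 (additivity).} Fix $0\le p<q\le\ell$ and $x\ge0$ with $q+x\le\ell$. I claim that
\[
T_{-x}\bigl(\{t_0\in[p+x,q+x)\}\bigr)=\{t_0\in[p,q)\},
\]
or equivalently, for every $\omega$,
\[
t_0(\omega)\in[p,q)\iff t_0(T_x\omega)\in[p+x,q+x).
\]
For the forward direction, let $t=t_0(\omega)\in[p,q)$. Then $x<2\ell-t$ holds, because $t+x<q+x\le\ell<2\ell$. So Lemma~\ref{l:saw}(a) gives $t_0(T_x\omega)=t+x$.

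For the converse, set $\omega'=T_x\omega$ and suppose $t'=t_0(\omega')\in[p+x,q+x)$. Since $x\le t'$, Lemma~\ref{l:saw}(b) gives $t_0(T_{-x}\omega')=t'-x$, and this lies in $[p,q)$.

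Because $T_x$ preserves $\mu$, it follows that
\[
\mu(\{t_0\in[p,q)\})=\mu(\{t_0\in[p+x,q+x)\}).
\]
Hence the measure of $\{t_0\in I\}$ for an interval $I\subset[0,\ell)$ depends only on the length of $I$. So $F$ satisfies $F(x+y)=F(x)+F(y)$ for $x+y\le\ell$, and $F$ is nondecreasing. A monotone additive function on $[0,\ell]$ is linear, so $F(x)=\lambda x$ with $\lambda=F(\ell)/\ell$. Note also that $F(0)=0$ trivially. This proves the displayed formula. For intervals of rational multiples of $\ell$ one could argue directly by splitting into equal pieces, and then extend to all $p,q$ by monotonicity.

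\textbf{Step 2 ($\lambda>0$).} This is the main obstacle. We need $\mu(\{t_0<\ell\})>0$. By (R3), $t_0<\infty$ a.e.\ on $P$, and $\mu(P)>0$, so $\mu(\{t_0<\infty\})>0$. I would argue by contradiction. Suppose $\mu(\{t_0<\ell\})=0$. By Step 1, $\mu(\{t_0\in[p,q)\})=0$ for every interval in $[0,\ell)$.

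Now take $\omega$ with $t_0(\omega)=t<\infty$. By Lemma~\ref{l:saw}(b), the points $T_{-x}\omega$ with $x\in(t-\ell,t]$ all have height below $\ell$. So the orbit of $\omega$ spends a set of times of positive Lebesgue measure (at least $\min(\ell,t)$, and in fact $\ell$ if $t\ge\ell$) in $\{t_0<\ell\}$.

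Next apply Tonelli to the jointly measurable function $\chi_{\{t_0<\ell\}}(T_{-x}\omega)$ on $\Omega\times[0,N]$. Since each $T_{-x}$ preserves $\mu$,
\[
\int_\Omega\bigl|\{x\in[0,N]:\ t_0(T_{-x}\omega)<\ell\}\bigr|\,d\mu(\omega)=N\mu(\{t_0<\ell\})=0.
\]
Choose $N$ large enough that $\mu(\{t_0<N\})>0$. Then for every $\omega$ with $t_0(\omega)<N$, the inner set contains $[\max(0,t_0(\omega)-\ell),t_0(\omega)]$, which has positive length. (When $t_0(\omega)=0$, it instead contains $\{0\}$ together with nearby points from Lemma~\ref{l:saw}(a) on the forward side, so one should work on $[-N,N]$ instead.) This makes the integral positive, which is a contradiction. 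Hence $\lambda>0$.

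The delicate points are the joint measurability of $(\omega,x)\mapsto t_0(T_{-x}\omega)$, which follows from Proposition~\ref{p:t0meas} and the joint measurability of the flow, and the careful handling of endpoints in the sawtooth ranges.
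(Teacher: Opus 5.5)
Your proof is correct and follows the paper. Step~1 is exactly the paper's argument: the sawtooth lemma gives $\{t_0\in[p,q)\}=T_x^{-1}(\{t_0\in[p+x,q+x)\})$, and measure preservation plus the monotone Cauchy equation finish it. Step~2 rests on the same idea, that a positive-measure set of orbits must pass through $\{t_0<\ell\}$. The paper uses a countable union, $P\subset\bigcup_{q\in\Q}T_q^{-1}(\{t_0<\ell\})$ up to a null set (via Lemma~\ref{l:saw}(a) after an upcrossing), where you use a Tonelli integral. Also, your separate treatment of $t_0(\omega)=0$ is unnecessary, since under your contradiction hypothesis $\{t_0=0\}\subset\{t_0<\ell\}$ is already null.
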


\begin{proof}
Let $0\le p<q$ and $x\ge0$ with $q+x\le\ell$. By Lemma~\ref{l:saw}(a), $T_x$ maps $\{t_0\in[p,q)\}$ into $\{t_0\in[p+x,q+x)\}$, and by Lemma~\ref{l:saw}(b), $T_{-x}$ maps the latter set into the former. Hence $T_x$ restricts to a bijection between these two sets, and since $T_x$ preserves $\mu$ they have the same measure. Consequently the nondecreasing function $\Lambda(y)=\mu(\{t_0\in[0,y)\})$, $0\le y\le\ell$, satisfies $\Lambda(y_1+y_2)=\Lambda(y_1)+\Lambda(y_2)$ whenever $y_1+y_2\le\ell$, and therefore $\Lambda(y)=\lambda y$ on $[0,\ell]$. This gives the formula.

To see that $\lambda>0$, let $\omega\in P$ be such that $R_\omega\ne\varnothing$; by~(R3) this holds for a.e.\ $\omega\in P$. Pick $t_1\in R_\omega$. By~(R1), $0\in R_{T_{t_1}\omega}$, so $t_0(T_{t_1}\omega)=0$, and Lemma~\ref{l:saw}(a) gives $t_0(T_q\omega)=q-t_1<\ell$ for every rational $q\in[t_1,t_1+\ell)$. Hence, up to a null set,
\[
P\subset\bigcup_{q\in\Q}T_q^{-1}\bigl(\{t_0<\ell\}\bigr),
\]
and since $\mu(P)>0$ and each $T_q$ preserves $\mu$, we obtain $\mu(\{t_0<\ell\})>0$.
\end{proof}

\subsection{The extremal set and the end of the proof}\label{ss:extremal}

Fix $\alpha\in(0,1)$ and put
\[
\varepsilon=\alpha\ell,\qquad \delta=\ell-\varepsilon=\varepsilon\Bigl(\frac1\alpha-1\Bigr),
\]
so that $\delta+\varepsilon=\varepsilon/\alpha=\ell$. Consider the band of heights $[\delta,\ell)$ and the halo $(0,\delta)$ below it:
\[
E=\{t_0\in[\delta,\ell)\},\qquad Z=\{t_0\in(0,\delta)\}.
\]
Both sets are measurable by Proposition~\ref{p:t0meas}, they are disjoint, and by Lemma~\ref{l:uniform}
\[
\mu(E)=\lambda\varepsilon>0,\qquad\mu(Z)=\lambda\delta .
\]

We claim that $E\cup Z\subset\{T^*\chi_E>\alpha\}$. Let $t_0(\omega)=t\in(0,\ell)$. For $u\in[0,\ell-t)$ we have $t+u<\ell<2\ell$, so $t_0(T_u\omega)=t+u$ by Lemma~\ref{l:saw}(a). Hence, for such $u$,
\[
T_u\omega\in E\iff u\ge\delta-t .
\]
If $t\in[\delta,\ell)$, then $T_u\omega\in E$ for all $u\in[0,\ell-t)$, so $\frac1s\int_0^s\chi_E(T_u\omega)\,du=1>\alpha$ for every $s\in(0,\ell-t]$. If $t\in(0,\delta)$, take $s=\ell-t$; the orbit spends the time $[0 , \delta-t)$ outside $E$ and then the time $[\delta-t,\ell-t)=[\delta-t,\delta+\varepsilon-t)$ inside $E$, so
\[
\frac1s\int_0^s\chi_E(T_u\omega)\,du=\frac{\varepsilon}{\ell-t}>\frac{\varepsilon}{\ell}=\alpha .
\]
This proves the claim. Consequently
\[
\mu(\{T^*\chi_E>\alpha\})\ge\mu(E)+\mu(Z)=\lambda(\varepsilon+\delta)=\lambda\ell=\frac{\lambda\varepsilon}{\alpha}=\frac{\mu(E)}{\alpha},
\]
which is~\eqref{e:goal}. This completes the proof of case~(b), and hence of Theorem~\ref{t:main}. \qed

\begin{remark}\label{r:extremal}
\begin{enumerate}
\item Points of height exactly $0$ are excluded from $Z$, since for them the average used above equals $\alpha$; this costs nothing, as $\mu(\{t_0=0\})=0$ by Lemma~\ref{l:uniform}.
\item Since $\lambda\le1/\ell$, the extremal set satisfies $\mu(E)=\lambda\varepsilon\le\alpha<1$; this gives Remark~\ref{r:main}(3).
\item Along each orbit, the pair $(E,Z)$ reproduces the Euclidean extremal configuration of Remark~\ref{r:main}(2): an interval of length $\varepsilon$ preceded by its halo of length $\varepsilon(1/\alpha-1)$. The upcrossing times provide the room needed to place one such copy between consecutive resets of the height.
\end{enumerate}
\end{remark}

\section{Further remarks}\label{s:remarks}

\subsection{The two-sided maximal operator}

Let
\[
T^{\pm}f(\omega)=\sup_{\substack{a',b'\ge0\\ a'+b'>0}}\frac{1}{a'+b'}\int_{-a'}^{b'}|f(T_u\omega)|\,du ,
\]
and let $C^{\pm}(\alpha)$ be defined as $C(\alpha)$ with $T^*$ replaced by $T^{\pm}$.

\begin{proposition}\label{p:twosided}
In case~(a) of Theorem~\ref{t:main}, $C^{\pm}(\alpha)=1$ for every $\alpha\in(0,1)$; in case~(b), $C^{\pm}(\alpha)=2/\alpha-1$ for every $\alpha\in(0,1)$.
\end{proposition}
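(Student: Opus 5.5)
Case~(a) goes exactly as in Subsection~\ref{ss:casea}. If $\I=\Sigma$, then for a.e.\ $\omega$ we have $\chi_E(T_u\omega)=\chi_E(\omega)$ for a.e.\ $u\in\R$; the Tonelli argument there applies to all real $u$, not only $u\ge0$. Hence $T^{\pm}\chi_E=\chi_E$ a.e., and the ratio is always $1$.

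In case~(b) there are two bounds to prove.

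\emph{Upper bound.} We need $\mu(\{T^{\pm}\chi_E>\alpha\})\le(2/\alpha-1)\mu(E)$. The plan is to transfer the sharp one-dimensional fact for the uncentered maximal operator on $\R$ restricted to intervals containing the point: $|\{M\chi_F>\alpha\}|\le(2/\alpha-1)|F|$. This follows from the rising sun lemma. The set $\{M\chi_F>\alpha\}$ splits into the set $\{M^+\chi_F>\alpha\}\cup\{M^-\chi_F>\alpha\}$ together with points where a two-sided window is needed. For the union we have $|U^+\cup U^-|\le |U^+|+|U^-|-|F|$, since both contain a.e.\ point of $F$. Using Remark~\ref{r:main}(2) this gives $2|F|/\alpha-|F|$, provided one checks that $\{M\chi_F>\alpha\}=\{M^+\chi_F>\alpha\}\cup\{M^-\chi_F>\alpha\}$.

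That identity is the key real-variable step, and I expect it to be the main obstacle. Here is the argument I would try. If $\frac{1}{a'+b'}\int_{x-a'}^{x+b'}\chi_F>\alpha$, split the interval at $x$. The average is a convex combination of the averages over $[x-a',x]$ and $[x,x+b']$, so one of the two one-sided averages exceeds $\alpha$. I will verify this carefully, since it seems to make the identity easy; then any remaining difficulty lies only in the inclusion–exclusion.

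To transfer to $\Omega$, I apply the rising-sun/Birkhoff argument orbitwise. Fix a large $L$ and restrict to windows of length at most $N$. Apply the real-line inequality to $F_\omega=\{u\in[0,L]:T_u\omega\in E\}$ for points $u\in[N,L-N]$. Integrate over $\omega$ and use Tonelli and measure preservation, as in the Calder\'on transference. Then let $L\to\infty$ and then $N\to\infty$. Alternatively, the inclusion–exclusion can be done directly on $\Omega$: $\{T^{\pm}\chi_E>\alpha\}=\{T^*\chi_E>\alpha\}\cup\{T^*_{-}\chi_E>\alpha\}$, where $T^*_{-}$ is the maximal operator of the reversed flow. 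Each of these sets has measure at most $\mu(E)/\alpha$ by Theorem~\ref{t:max}, and both contain a.e.\ point of $E$ by Lemma~\ref{l:diff} applied to the flow and to its reversal. This gives the bound $2\mu(E)/\alpha-\mu(E)$ with no transference at all, and I would use this route.

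\emph{Lower bound.} I reuse the height function of Section~\ref{s:proof}. I will choose a band of heights $E=\{t_0\in[c,c+\varepsilon)\}$ sitting in the middle of the sawtooth, with room $\delta=\varepsilon(1/\alpha-1)$ below it and room $\delta$ above it. This requires $c-\delta\ge 0$ and $c+\varepsilon+\delta\le\ell$, where $\ell$ plays the role of half the minimal spacing $2\ell$ from (R2). I take $\varepsilon$ small relative to $\ell$ so that $\varepsilon+2\delta\le\ell$.

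By Lemma~\ref{l:saw}, along each orbit near such heights the height increases at unit speed, in both forward and backward time. For a point of height $t\in(c-\delta,c)$, the forward window $[0,c+\varepsilon-t)$ gives average $>\alpha$, as in the one-sided proof. For a point of height $t\in[c+\varepsilon,c+\varepsilon+\delta)$, Lemma~\ref{l:saw}(b) says that going back by $x\le t$ lowers the height to $t-x$. So the backward window of length $t-c$ has average $\varepsilon/(t-c)>\alpha$.

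Lemma~\ref{l:uniform} then gives
\[
\mu(\{T^{\pm}\chi_E>\alpha\})\ge\lambda(\varepsilon+2\delta)=\Bigl(\frac2\alpha-1\Bigr)\mu(E),
\]
which matches the upper bound.
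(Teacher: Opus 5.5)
Your proposal is correct and follows essentially the same route as the paper. For the upper bound you write the two-sided average as a convex combination of one-sided ones, apply Theorem~\ref{t:max} to the flow and to its reversal, and use Lemma~\ref{l:diff} for the inclusion--exclusion; for the lower bound you take a band of heights $\{t_0\in[c,c+\varepsilon)\}$ with halos of length $\delta=\varepsilon(1/\alpha-1)$ on each side, reached by forward windows from below and backward windows from above (the paper takes $c=\delta$ and $\varepsilon+2\delta=\ell$ exactly), and the only step you leave implicit is that $E$ itself lies in the level set, which follows at once from a short forward window.
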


\begin{proof}
Let $T^*_-$ be the one-sided maximal operator of the reversed flow,
\[
T^*_-f(\omega)=\sup_{s>0}\frac1s\int_{-s}^0|f(T_u\omega)|\,du .
\] For $a',b'>0$ the average over $[-a',b']$ is a convex combination of the averages over $[-a',0]$ and over $[0,b']$. Hence $\{T^{\pm}\chi_E>\alpha\}\subset L_+\cup L_-$, where $L_+=\{T^*\chi_E>\alpha\}$ and $L_-=\{T^*_-\chi_E>\alpha\}$. By Theorem~\ref{t:max} applied to both flows, $\mu(L_\pm)\le\mu(E)/\alpha$, and by Lemma~\ref{l:diff} applied to both flows, $L_+\cap L_-$ contains a.e.\ point of $E$. Therefore
\[
\mu(\{T^{\pm}\chi_E>\alpha\})\le\mu(L_+)+\mu(L_-)-\mu(L_+\cap L_-)\le\Bigl(\frac2\alpha-1\Bigr)\mu(E),
\]
so $C^{\pm}(\alpha)\le2/\alpha-1$. Taking $E=\Omega$ gives $C^{\pm}(\alpha)\ge1$, and in case~(a) the argument of Section~\ref{ss:casea} gives $T^{\pm}\chi_E=\chi_E$ a.e., hence $C^{\pm}(\alpha)=1$.

In case~(b) we keep the notation of Section~\ref{s:proof} and set $\varepsilon=\alpha\ell/(2-\alpha)$ and $\delta=\varepsilon(1/\alpha-1)$, so that $2\delta+\varepsilon=\ell$ and $\delta+\varepsilon=\varepsilon/\alpha$. Let
\[
E=\{t_0\in[\delta,\delta+\varepsilon)\},\qquad Z_-=\{t_0\in(0,\delta)\},\qquad Z_+=\{t_0\in(\delta+\varepsilon,\ell)\}.
\]
The argument of Section~\ref{ss:extremal}, with forward windows ending at the top of the band, shows that $E\cup Z_-\subset\{T^{\pm}\chi_E>\alpha\}$. If $t_0(\omega)=t\in(\delta+\varepsilon,\ell)$, Lemma~\ref{l:saw}(b) gives $t_0(T_{-x}\omega)=t-x$ for $0\le x\le t$, so on the backward window $[-(t-\delta),0]$ the orbit spends the time $\varepsilon$ in $E$, and the average equals $\varepsilon/(t-\delta)>\varepsilon/(\delta+\varepsilon)=\alpha$, because $t-\delta<\ell-\delta=\delta+\varepsilon$. Hence $Z_+\subset\{T^{\pm}\chi_E>\alpha\}$ as well, and by Lemma~\ref{l:uniform}
\[
\mu(\{T^{\pm}\chi_E>\alpha\})\ge\lambda(\varepsilon+2\delta)=\lambda\ell=\Bigl(\frac2\alpha-1\Bigr)\lambda\varepsilon=\Bigl(\frac2\alpha-1\Bigr)\mu(E). \qedhere
\]
\end{proof}

\subsection{Relation with the Ambrose--Kakutani representation}

The set $\{\omega:\ 0\in R_\omega\}$ is a cross-section for the flow, with return times at least $2\ell$, and $t_0$ is the vertical coordinate of the corresponding representation of the flow as a flow under a function; compare Ambrose~\cite{Amb41} and Ambrose--Kakutani~\cite{AK42}. We have avoided constructing the transverse measure on this (null) cross-section by computing the distribution of $t_0$ directly in Lemma~\ref{l:uniform}. In particular, no standard-space assumption is needed, and periodic flows require no separate treatment.

\subsection{Multiparameter flows}

Problem~7.1 of~\cite{HP16} also concerns measure-preserving actions of $\R^n$, that is, $n$ commuting flows, together with maximal operators associated with rectangles or balls containing the origin. It would be interesting to know whether a continuous-parameter version of the transference argument of~\cite[Section~3]{HP16} yields the Euclidean estimates~\eqref{e:euclid} for these operators, thereby avoiding the loss in the exponent of $C^*_{HL}$ in~\eqref{e:HP16} that comes from discretization, and whether, in analogy with Theorem~\ref{t:main}(b), the resulting bounds are sharp for free actions.

\subsection*{Acknowledgments}
The author thanks Paul Hagelstein for suggesting this problem and for helpful discussions.


\end{document}